\definecolor{dark-red}{rgb}{.54,.0,.0}
\definecolor{dark-green}{rgb}{.0,.4,.0}
\definecolor{dark-blue}{rgb}{.04,.04,.4}
\def\Rb{{\mathbb R}}
\newcommand{\nc}{\newcommand}
\newcommand{\fhi}{\varsigma}
\renewcommand{\phi}{\varphi}
\newcommand{\eps}{\varepsilon}
\nc{\txt}{\textstyle}
\nc{\be}{\begin{equation}}
\nc{\ee}{\end{equation}}
\nc{\ba}{\begin{eqnarray}}
\nc{\ea}{\end{eqnarray}}
\nc{\bas}{\begin{eqnarray*}}
\nc{\eas}{\end{eqnarray*}}
\nc{\weak}{\rightharpoonup}
\nc{\paq}{$({\mathcal{P}})_{\alpha,q}$}
\nc{\paz}{$({\mathcal{P}})_{\alpha,\tz}$}
\nc{\as}{s}
\nc{\bt}{t}
\nc{\tf}{{2^\flat}}
\nc{\Om}{\Omega}
\nc{\ek}{{\eps_{k}}}
\nc{\ak}{{\alpha_{k}}}
\nc{\ck}{{C_{k}}}
\nc{\ct}{\tilde{C}}
\nc{\cc}{\hat{C}}
\nc{\uk}{{u_{k}}}
\nc{\Uk}{{U_{k}}}
\nc{\ts}{{2^*}}
\nc{\tsu}{{{2^*}-1}}
\nc{\tsd}{{{2^*}-2}}
\nc{\tst}{{{2^*}-3}}
\nc{\tz}{{2^\#}}
\nc{\tzu}{{{2^\#}-1}}
\nc{\tzd}{{{2^\#}-2}}
\nc{\wk}{{w_{k}}}
\nc{\sndd}{\frac{S^\frac N2}2}
\nc{\sddn}{\frac{S}{2^{\frac 2N}}}
\nc{\igukw}{{\int\nabla\Uk\cdot\nabla\wk}}
\nc{\iukw}{{\int U_k\wk}}
\nc{\iuks}{{|U_k|_{2^*}^{2^*}}}
\nc{\iukz}{{|U_k|_{2^\#}^{2^\#}}}
\nc{\iukpz}{{|u_k|_{2^\#}^{2^\#}}}
\nc{\iusu}{{\int U_k^{{2^*}-1} w_k}}
\nc{\iusd}{{\int U_k^{{2^*}-2} w_k^2}}
\nc{\iust}{{\int U_k^{{2^*}-3} w_k^3}}
\nc{\iuzu}{{\int U_k^{{2^\#}-1} |w_k|}}
\nc{\iuzd}{{\int U_k^{{2^\#}-2} w_k^2}}
\nc{\iuz}{{|u|_{\tz}^\tz}}
\nc{\ius}{{|u|_{\ts}^\ts}}
\nc{\nguk}{{|\nabla\Uk|_{2}}}
\nc{\ngukq}{{|\nabla\Uk|_{2}^2}}
\nc{\ngwkq}{{|\nabla\wk|_{2}^2}}
\nc{\nuk}{{|\Uk|_{2}}}
\nc{\nukq}{{|\Uk|_{2}^2}}
\nc{\nuks}{{|\Uk|_{\ts}}}
\nc{\nuksq}{{|\Uk|_{\ts}^2}}
\nc{\nukss}{{|\Uk|_{\ts}^\ts}}
\nc{\nuksmd}{{|\Uk|_{\ts}^{-2}}}
\nc{\nwk}{{||\wk||}}
\nc{\nwks}{{|\wk|_{\ts}}}
\nc{\nwksq}{{|\wk|_{\ts}^2}}
\nc{\ndwkq}{{|\wk|_{2}^2}}
\nc{\nwkq}{{||\wk||^2}}
\nc{\nwkr}{{||\wk||^r}}
\nc{\ngukpq}{{|\nabla u_{k}|_{2}^2}}
\nc{\ngupq}{{|\nabla u|_{2}^2}}
\nc{\nukps}{{|u_{k}|_{\ts}}}
\nc{\nukpss}{{|u_{k}|_{\ts}^\ts}}
\nc{\nups}{{|u|_{\ts}}}
\nc{\nupss}{{|u|_{\ts}^\ts}} 
\nc{\nupsz}{{|u|_{\ts}^\tz}} 
\nc{\nukpsq}{{|u_{k}|_{\ts}^2}}
\nc{\nupsq}{{|u|_{\ts}^2}}
\nc{\nupq}{{|u|_{2}^2}}
\nc{\nukpq}{{|u_k|_{2}^2}}
\nc{\ql}{\frac{\ngukq}{|\Uk|_{\ts}^{2^*}}}
\nc{\beuk}{\frac{\ngukq+a\nukq}{\nuksq}}
\nc{\nhu}{||u||^2}
\nc{\pnhu}{||u||^2}
\nc{\pnhumeio}{||u||}
\nc{\nhukpq}{||u_{k}||^2}
\nc{\pnhukpq}{||u_{k}||^2}
\nc{\pnhukpqmeio}{||u_{k}||}
\nc{\nhukq}{||U_{k}||^2}
\nc{\pnhukq}{||U_{k}||^2}
\nc{\nhwkq}{||w_{k}||^2}
\nc{\pnhwkq}{||w_{k}||^2}
\nc{\ttt}{{\Rb^{N}_{+}}}
\nc{\ue}{U_{\eps}}
\nc{\pb}{\bar{\phi}_{\eps}}
\nc{\pbq}{\bar{\phi}_{\eps}^2}
\nc{\pk}{{\phi}_{\eps}}
\nc{\uz}{U^\tzd_{\eps}}
\nc{\uzz}{U^\tzd_{\eps,y_{\eps}}}
\nc{\us}{U^\tsd_{\eps}}
\nc{\uss}{U^\tsd_{\eps,y_{\eps}}}
\nc{\et}{\eta_{\eps}}
\nc{\etq}{\eta_{\eps}^2}
\nc{\pt}{{\tilde\phi}}
\nc{\pte}{{\tilde{\phi}_{\eps}}}
\nc{\ptq}{{{\tilde\phi}^2}}
\nc{\pteq}{\tilde{\phi}_{\eps}^2}
\nc{\ome}{{\Om_{\eps}}}
\nc{\mk}{M_{k}}
\nc{\dk}{\delta_{k}}
\nc{\pkk}{P_{k}}
\nc{\ndd}{{\frac{N-2}{2}}}
\nc{\ium}{U_{\delta_{k},P_{k}}}
\nc{\vk}{{v_{k}}}
\nc{\gb}{\gamma+\sqrt{\gamma^2+4\beta}}
\nc{\gbb}{\gamma_{\infty}+\sqrt{\gamma_{\infty}^2+4\beta_{\infty}}}
\nc{\ia}{\frac{1}{4(\tz)^{\frac{2}{N}}}
         \left[\left(\gb\right)^{N}+2\cdot\ts\beta\left(\gb\right)^{N-2}
         \right]^{\frac{2}{N}}}
\nc{\iab}{\frac{1}{4(\tz)^{\frac{2}{N}}}
         \left[\left(\gbb\right)^{N}+2\cdot\ts\beta_{\infty}\left(\gbb\right)^{N-2}
         \right]^{\frac{2}{N}}}
\nc{\aaa}{\mbox{{\sl a}}}
\nc{\bg}{\mbox{\b{$\gamma$}}}
\nc{\bb}{\mbox{\b{$\beta$}}}
\nc{\zs}{{\frac{\tz}{\ts}}}
\nc{\sz}{{\frac{\ts}{\tz}}}
\nc{\ds}{{\frac{2}{\ts}}}
\nc{\gx}{\bg x^{\zs}}
\nc{\sd}{S_{2}}
\nc{\gxq}{\bg^2 x^{2\zs}}
\nc{\az}{S^{\frac N2}\frac{A(N)}{B(N)}\max_{\partial\Om}H}
\nc{\azz}{A(N)\max_{\partial\Om}H}
\nc{\bd}{\mbox{\b{$\delta$}}}
\nc{\dd}{\frac{1}{2}\alpha\delta}
\nc{\tzud}{\frac{\tz+1}{2}}
\nc{\tzdois}{\frac{\tz}{2}}
\nc{\lb}{\left(}
\nc{\rb}{\right)}
\nc{\defdel}{\frac{\iuz}{\pnhumeio\cdot|u|_{\ts}^{\ts\!/2}}}
\nc{\defdeli}{\frac{|u|_{q}^{q\bt}}{\pnhumeio^{2-\as}
     |u|_{\ts}^{\ts\as/2}}}
\nc{\intum}{\int(\nabla u\cdot\nabla\phi+au\phi)}
\nc{\intdois}{\int u^\tzu\phi}
\nc{\inttres}{\int u^{\tsu}\phi}
\nc{\cs}{{\mathcal{S}}}
\nc{\sddnd}{S/2^{\frac{2}{N}}}   
\newcounter{um}
\newcounter{dois}
\nc{\uq}{|u|_{q}^q}
\nc{\uqb}{|u|_{q}^{q\bt}}
\nc{\qa}{\lb\frac{\pnhumeio}{|u|_{\ts}^{\ts\!/2}}\rb^\as}
\nc{\qau}{\frac{\pnhumeio^\as}{|u|_{\ts}^{2+\ts\as/2}}}
\nc{\beps}{\epsilon}
\nc{\uqz}{U^{q\bt-2}_{\eps,y_{\eps}}}
\nc{\gxi}{\bb x^\ds+{\textstyle
    \frac{||\mu||}{||\nu||^{2/\ts}}}(1-x)^\ds}
\nc{\gxil}{\bb x+{\textstyle
    \frac{||\mu||}{||\nu||^{2/\ts}}}(1-x)}
\nc{\du}{A_{1}}
\nc{\ddois}{A_{2,k}}
\nc{\dt}{A_{3,k}}
\nc{\fu}{B_{1,k}}
\nc{\fd}{B_{2,k}}
\nc{\ft}{B_{3,k}}
\newtheorem{thm}{Theorem}[section]
\newtheorem{lem}[thm]{Lemma}
\newtheorem{rem}[thm]{Remark}
\begin{document}

\title{A family of sharp inequalities for Sobolev functions}
\subjclass[2000]{46E35, 35J65}

\author{Pedro M.\ Gir\~{a}o}
\thanks{Partially supported
    by FCT (Portugal).}
\address{Mathematics Department, Instituto Superior T\'{e}cnico, Av.\ 
    Rovisco Pais, 1049-001 Lisbon, Portugal}
\email{pgirao@math.ist.utl.pt}

   \begin{abstract}
         \noindent 
         Let $N\geq 5$, $\Om$ be a smooth bounded domain in 
         $\Rb^{N}$, 
         $\ts=\frac{2N}{N-2}$, $a>0$,
         $S=\inf\left\{\left.
         \int_{\Rb^{N}}|\nabla u|^2\,\right|\,
         u\in L^\ts(\Rb^{N}), \nabla u\in L^2(\Rb^{N}),
         \int_{\Rb^{N}}|u|^\ts=1
         \right\}$
         and 
         $||u||^2=
         |\nabla u|_{2}^2+a|u|_{2}^2$. 
         We define
         $\tf=\frac{2N}{N-1}$,
$\tz=\frac{2(N-1)}{N-2}$
and consider $q$ such that $\tf\leq q\leq\tz$.  We also define
$\as=2-N+\frac{q}{\ts-q}$
and
$\bt=\frac{2}{N-2}\cdot\frac{1}{\ts-q}$.
We prove that there exists an $\alpha_{0}(q,a,\Om)>0$ such that, for
all 
$u\in H^1(\Om)\setminus\{0\}$,
$$\sddn\nupsq\leq\nhu+\alpha_{0}\qa\uqb,\eqno{(I)_{q}}$$
where the norms are over $\Om$.
Inequality $(I)_{\tf}$ is due to M.\ Zhu.  
    \end{abstract}
    
\maketitle

\section{Introduction}

Let $N\geq 5$, $\Om$ be a smooth bounded domain in 
$\Rb^{N}$, $\tf=\frac{2N}{N-1}$,
$\tz=\frac{2(N-1)}{N-2}$, $\ts=\frac{2N}{N-2}$, $a>0$
and $||u||^2=|\nabla u|_{2}^2+a|u|_{2}^2$.
Unless otherwise indicated, norms are over $\Om$.
We recall that the 
infimum
$$
S:=\inf_{\stackrel{
\mbox{\tiny $u\in
L^\ts(\Rb^{N})\setminus\{0\}$}} 
{\mbox{\tiny $\nabla u\in L^2(\Rb^{N})$}
}}
\frac{\int_{\Rb^{N}}|\nabla 
u|^2}{\lb\int_{\Rb^{N}}|u|^\ts\rb^{2/\ts}}
$$
is achieved by the Talenti instanton
$
U(x):=\left(\frac{N(N-2)}{N(N-2)+|x|^2}\right)^{\frac{N-2}{2}}
$.

M.\ Zhu proved in \cite{Zh} that there exists $\bar\alpha_{0}
>0$ such that
\be\label{zhu}
\sddn\nupsq\leq||u||^2+\bar\alpha_{0}|u|_{\tf}^2,
\ee
for all $u\in H^1(\Om)$.
It was announced by the author in \cite{Girao} that there exists 
$\tilde\alpha_{0}>0$ such that
\be
\label{pg}
\sddn\nupsq\leq||u||^2+\tilde\alpha_{0}\frac{||u||}{|u|_{\ts}^{\ts/2}}
|u|_{\tz}^{\tz},
\ee
for all $u\in H^1(\Om)\setminus\{0\}$.
In this work we prove a family of inequalities which includes
(\ref{zhu}) and (\ref{pg}) as special cases.

The work of M.\ Zhu was motivated by the works \cite{AM} and
\cite{WX},
by Adimurthi and Mancini and by X.J.\ Wang, respectively. They imply 
that one cannot expect the existence of a constant $\bar\alpha_{0}$ 
such that
$$
\sddn\nupsq\leq||u||^2+\bar\alpha_{0}|u|_{2}^2,
$$
for all $u\in H^1(\Om)$.
In \cite{Zh}, M.\ Zhu raises the $L^2$ norm on the right
hand side to a higher $L^q$ norm in order to obtain an inequality 
valid in $H^1(\Om)$.

The work \cite{Girao} was motivated by \cite{WX}, the referred work of
X.J.\ Wang, and by \cite{G}, by D.G.\ Costa and the author.  
Both \cite{WX} and \cite{G} consider 
the problem
$$
\left\{\begin{array}{ll}
-\Delta u+au+\alpha u^{q-1}=u^\tsu&\mbox{in\ }\Om,\\
u>0&\mbox{in\ }\Om,\\
\frac{\partial u}{\partial\nu}=0&\mbox{on\ }\partial\Om.
\end{array}\right.\eqno{({\mathcal{P}})_{\alpha,q}}
$$
From \cite{WX} we know that if
$q<\tz$, then problem \paq\ has a ground state solution
for all values of $\alpha\geq 0$.  From \cite{G} we know that
there exists 
$\alpha_{0}>0$ such that if $\alpha<\alpha_{0}$, then problem \paz\
    has a ground state solution and
    if  $\alpha>\alpha_{0}$, then problem \paz\ has no ground state 
    solution.  The solutions of \paq\ correspond to 
critical points of 
the functional $\Phi_{\alpha}:H^1(\Om)\to\Rb$, defined by
\be\label{phivelho}
\Phi_{\alpha}(u):=\frac 
12||u||^2+\frac{\alpha}{q}|u|_{q}^q-\frac 1\ts\ius.
\ee
We recall that a 
ground state solution, or least energy solution, of \paq\ is a
function 
$u\in H^1(\Om)$ such that
$$
\Phi_{\alpha}(u)=\inf_{{\mathcal{N}}}\Phi_{\alpha}.
$$
The set ${\mathcal{N}}$ is the Nehari manifold,
${\mathcal{N}}:=\{u\in H^1(\Om)\setminus\{0\}:
\Phi_{\alpha}^\prime(u)u=0\}$.
When $q=\tz$ it is possible to determine 
explicitly the function $\Phi_{\alpha}|_{{\mathcal{N}}}$ by 
solving a quadratic equation. 
The analysis of \cite{G} takes advantage of
this fact.  As a by-product it implies a certain inequality
(see (15) of \cite{G}).  Inequality (\ref{pg}) is an improvement of
the 
inequality in \cite{G}. 

The idea of the proof of inequalities (\ref{zhu}) and (\ref{pg}) is 
 based on an argument by contradiction.  Indeed, consider the 
the functionals $\Psi_{\alpha}:H^1\setminus\{0\}\to\Rb$ defined by 
$$
\Psi_{\alpha}(u)=
\frac{||u||^2}{\nupsq}+\alpha\frac{|u|_{\tf}^2}{\nupsq}
\quad\mbox{or}\quad
\Psi_{\alpha}(u) =
\frac{||u||^2}{\nupsq}+\alpha\frac{||u||}{|u|_{\ts}^{2+\ts/2}}
|u|_{\tz}^{\tz}.
$$
Let $(\ak)$ be any sequence of nonnegative real numbers such that
$\ak\to+\infty$.
If (\ref{zhu}) (respectively (\ref{pg})) is false, then, for each $k$,
$\inf_{H^1(\Om)\setminus\{0\}}
\Psi_{\alpha_{k}}<\sddn$.  This implies that 
$\Psi_{\alpha_{k}}$ has a line of minima
(with 0 removed), 
which are called least energy critical points of $\Psi_{\alpha_{k}}$.
One of these,
$u_{k}$, 
satisfying an appropriate normalization condition, is chosen.
Using the blow-up technique, it 
is possible to prove that there exist a sequence $(U_{k})$ of Talenti 
instantons, concentrating at the boundary of $\Om$, such that 
the 
$H^1$ norm of the difference between $u_{k}$ 
and $U_{k}$ approaches zero, as $k\to+\infty$.  The value of
$\Psi_{\alpha_{k}}(U_{k})$ can be used to
estimate 
$\Psi_{\alpha_{k}}(u_{k})$ from below.  However, 
$\Psi_{\alpha_{k}}(U_{k})>\sddn$ for large $k$.  This contradicts
the hypothesis that $\alpha_{0}=+\infty$.
We use this argument to prove our family of inequalities.
We remark that in the present analysis the functional 
$\Phi_{\alpha}$ in (\ref{phivelho}) is replaced by
$\Phi_{\alpha}:H^1(\Om)\setminus\{0\}\to\Rb$ defined by
$$
\Phi_{\alpha}(u)=\lb
\frac 12\nhu-\frac{1}{\ts}\ius
\rb(1+\alpha\delta(u))^{\frac{N}{2}},
$$
where $\delta:H^1(\Om)\setminus\{0\}\to\Rb$, depending
on $q$, is homogeneous
of degree zero.  This leads to the problem
$$
  \left\{\begin{array}{rcll}\!\!\!
   \lb 1+\frac{\as}{2}\alpha\delta(u)\rb(-\Delta u+au)&&&\\+
   \frac{q\bt}{2}\alpha |u|_{q}^{q(\bt-1)}|u|^{q-2}u
   &\!\!=\!\!&
   \lb 1+\lb1+\as\frac{\ts}{4}\rb\alpha\delta(u)\rb 
   |u|^\tsd u&\mbox{in\ }\Om,\\
    u&\!\!>\!\!&0
    &\mbox{in\ }\Om,\\ 
    \frac{\partial u}{\partial\nu}&\!\!=\!\!&0
   &\mbox{on\ }\partial\Om,
   \end{array}\right.
$$
where $\as\in[0,1]$ and $\bt\in\left[\frac{2}{\tf},1\right]$
are constants which depend on $q$ and $N$.

Our approach is based on the work
\cite{APY}, due to Adimurthi, Pacella and 
Yadava.
We use \cite{AM}, \cite{G}, \cite{WX} and \cite{Zh},
already mentioned.  Of course,
Talenti~\cite{T},
Brezis and Nirenberg~\cite{BN} and P.L.\ 
Lions~\cite{L1} are also of major importance.
To our knowledge,
Hebey and Vaugon~\cite{HV} were the first 
to use a contradiction argument based on blow-up
estimates to obtain sharp Sobolev inequalities.
We refer to 
Adimurthi and Yadava~\cite{AY1},
Brezis and Lieb~\cite{BL}, 
Chabrowski and Willem~\cite{CW},
Li and Zhu~\cite{LZ},
Lions, Pacella and Tricarico~\cite{LPT},
Z.Q.\ Wang~\cite{W2,W1} and
M.\ Zhu~\cite{zhu}
for related results.

The organization of this work is as follows.
In Section~2 we introduce a family of 
functionals, derive their associated Euler equations
and state our main theorem.
In Section~3, arguing by contradiction,
we assume that least energy critical points exist for all positive 
values of $\alpha$ and analyze their asymptotic behavior.
In Section~4 we prove our main theorem.
Finally, in the Appendix we prove a 
technical estimate similar to those in 
Adimurthi and Mancini~\cite{AM}.

\section{The functionals and their associated Euler
equations}

Let $N\geq 5$, $a>0$, $\alpha\geq 0$ and $\Om$ be a smooth 
bounded domain in $\Rb^{N}$.
We regard $a$ as fixed and 
$\alpha$ as a parameter.
Denote the $L^p$ and $H^1$ norms of $u$ in $\Om$ by
$$|u|_{p}:=\left(\textstyle\int|u|^p\right)^\frac{1}{p}\qquad  
\mbox{and}\qquad
||u||:=\left(\ngupq+a\nupq\right)^\frac{1}{2}.$$
Unless otherwise indicated,
integrals are over $\Om$. 

Let
$$\ts=\ts(N):=
\frac{2N}{N-2}$$
be the critical exponent for the Sobolev embedding
$H^1(\Om)\subset L^q(\Om)$,
$$\tf=\tf(N):=\frac{2N}{N-1}\qquad\mbox{and}\qquad
\tz=\tz(N):=\frac{2(N-1)}{N-2}.$$
We consider $q$ such that $$\tf\leq q\leq\tz,$$
and define $\as\in[0,1]$ and $\bt\in\left[\frac{2}{\tf},1\right]$
by
\be\label{as}\txt\as=2-N+\frac{q}{\ts-q}
\ee
and
\be\label{bt}
\txt\bt=\frac{2}{N-2}\cdot\frac{1}{\ts-q}.\ee
We easily check that\footnote{The reader can also 
verify that $s=\mbox{\tiny $(N-1)$}
\times\frac{\,\,\,q-\tf}{\,\ts-q}$ and
$t=\frac{s}{N}+\frac{N-1}{N}$.}
\be\label{qba}\txt
q\bt=\frac{2}{N-2}\cdot\as+2.\ee
Moreover,
$$\begin{array}{lllll}
q=\tf&\implies&\as=0&\mbox{and}&\bt=\frac{2}{\tf},\\
q=\tz&\implies&\as=1&\mbox{and}&\bt=1.
\end{array}$$

We recall that the 
infimum
$$
S:=\inf_{\stackrel{
\mbox{\tiny $u\in
L^\ts(\Rb^{N})\setminus\{0\}$}} 
{\mbox{\tiny $\nabla u\in L^2(\Rb^{N})$}
}}
\frac{\int_{\Rb^{N}}|\nabla 
u|^2}{\lb\int_{\Rb^{N}}|u|^\ts\rb^{2/\ts}},
$$
 which depends on $N$, is achieved by the Talenti instanton
$$
U(x):=\left(\frac{N(N-2)}{N(N-2)+|x|^2}\right)^{\frac{N-2}{2}}.
$$
This instanton $U$ satisfies 
\begin{equation}\label{inst}
-\Delta U=U^{\ts-1},
\end{equation} so that
\begin{equation}\label{insta}\int_{\Rb^{N}}
    |\nabla U|^2=\int_{\Rb^{N}} U^\ts=S^{\frac{N}{2}}=
    [N(N-2)]^\frac{N}{2}\omega_{N}
    \frac{1}{2^{N}}\sqrt{\pi}
    \frac{\Gamma\left(\frac{N}{2}\right)}{\Gamma\left(\frac{N+1}{2}\right)}.
\end{equation}
The value $\omega_{N}$ is the volume of the $N-1$ dimensional 
unit sphere:
    $$
    \omega_{N}=\frac{2\pi^\frac{N}{2}}{\Gamma\lb
    \frac{N}{2}\rb}.$$ 
Substituting this value in the previous equation,  
    $$S^{\frac N2}=\frac{\pi^\frac{N+1}{2}}{2^{N-1}}\cdot
    \frac{[N(N-2)]^\frac{N}{2}}{\Gamma\lb\frac{N+1}{2}\rb}.
    $$
Let $\eps>0$ and $y\in\Rb^{N}$.  
We define the rescaled instanton
\be
U_{\eps,y}:=\eps^{-\frac{N-2}{2}}U\left(
\frac{x-y}{\eps}\right),\label{resins}
\ee
which also satisfies (\ref{inst}) and (\ref{insta}).

We are interested in studying the $C^2$ functionals
$\Psi_{\alpha}:H^1(\Om)\setminus\{0\}\to\Rb$, defined
by
\be\label{psiu}
\Psi_{\alpha}(u):=
\frac{\nhu}{\nupsq}\lb1+\alpha\defdeli\rb.
\ee

We regard $\Psi_{\alpha}$ as a restricted functional, 
in following sense.
Consider the 
functionals $\beta$ and $\delta:H^1(\Om)\setminus\{0\}\to\Rb$, 
homogeneous of degree zero, defined by
$$
\beta(u):=\frac\nhu\nupsq
$$
and
$$
\delta(u):=\defdeli.
$$
 We can write $\Psi_{\alpha}$ in terms of $\alpha$,
$\beta$ and $\delta$ as
$$
\Psi_{\alpha}=\beta(1+\alpha\delta).
$$
 Consider also the $C^2$ functionals
$\Phi_{\alpha}:H^1(\Om)\setminus\{0\}\to\Rb$, defined by
 $$
\Phi_{\alpha}(u):=\lb
\frac 12\nhu-\frac{1}{\ts}\ius
\rb\lb1+\alpha\defdeli\rb^{\frac{N}{2}}=
\Phi_{0}(u)(1+\alpha\delta(u))^{\frac{N}{2}}.
$$
 We recall that the Nehari manifold is
$${\mathcal{N}}:=\left\{u\in H^1(\Om)\setminus\{0\}:
\Phi_{\alpha}^\prime(u)u=0\right\}
=\left\{u\in H^1(\Om)\setminus\{0\}:||u||^2=\ius\right\}.$$
For any $u\in H^1(\Om)\setminus\{0\}$, there exists a 
unique $\tau(u)>0$ such that 
$\tau(u)u\in{\mathcal{N}}$.  The value of $\tau(u)$ is
$$
\tau(u)=\lb\frac{\nhu}{\ius}\rb^\frac{N-2}{4}
$$
and
$$
{\txt\frac{1}{N}}\lb\Psi_{\alpha}(u)\rb^\frac{N}{2}=
\Phi_{\alpha}(\tau(u)u).
$$

Next we derive the Euler equation 
associated to $\Phi_{\alpha}$.  Since
$$
\Phi_{\alpha}^\prime=(1+\alpha\delta)^{\frac{N}{2}-1}\left[
\Phi_{0}^\prime(1+\alpha\delta)+{\textstyle \frac{N}{2}}
\Phi_{0}\alpha\delta'
\right]
$$
and
\bas\delta'(u)(\phi)&=&
-\ (2-\as)
\frac{\delta(u)}{\nhu}\intum\\
&&+\ q\bt\frac{\delta(u)}{|u|_{q}^q}
\int (|u|^{q-2} u\phi)\\
&&-\ \frac{\ts}{2\,}\as
\frac{\delta(u)}{\ius}\int (|u|^{\tsd}u\phi),
\eas
for all $\phi\in H^1(\Om)$, 
the critical points of $\Phi_{\alpha}$ satisfy
\addtocounter{equation}{+1}
\setcounter{dois}{\theequation}
\nc{\rfd}{$(\thedois)_{\alpha}$}
$$
\begin{array}{rcl}{\txt
\left[1+\alpha\delta(u)\lb 1-(2-\as)\frac{N}{4}+(2-\as)\frac{N-2}{4}
\frac{\ius}{\nhu}\rb\right]}{\displaystyle
\intum} &&
\\
+\ {\txt\left[
\lb\frac 12\lb\frac{||u||}{|u|_{\ts}^{\ts\!/2}}\rb^\as-
\frac{1}{\ts}\lb\frac{|u|_{\ts}^{\ts\!/2}}{||u||}\rb^{2-\as}\rb\right]
\frac{q\bt N}{2}}{\displaystyle\alpha |u|_{q}^{q(\bt-1)}
\int (|u|^{q-2} u\phi)}&&\\ 
-\ \left[1+\alpha\delta(u)\lb 1-\as\frac{N}{4}+\as\frac{\ts 
N}{8}\frac{\nhu}{\ius}\rb\right]
{\displaystyle\int (|u|^{\tsd}u\phi)}&=&0,
\end{array}\eqno{(\theequation)_{\alpha}}
$$
for all $\phi\in H^1(\Om)$.
However, 
this equation can be simplified.
By taking $\phi=u$,
i.e., by differentiating $\Phi_{\alpha}$ along the radial 
direction,
we deduce that $||u||^2=\ius$.
So the critical points of $\Phi_{\alpha}$ satisfy
\addtocounter{equation}{+1}
\setcounter{um}{\theequation}
\nc{\rf}{$(\theum)_{\alpha}$}
\nc{\rfk}{$(\theum)_{\alpha_{k}}$}
\nc{\rfz}{$(\theum)_{\alpha_{0}}$}
$$
  \left\{\begin{array}{rcll}\!\!\!
   \lb 1+\frac{\as}{2}\alpha\delta(u)\rb(-\Delta u+au)&&&\\+
   \frac{q\bt}{2}\alpha |u|_{q}^{q(\bt-1)}|u|^{q-2}u
   &\!\!=\!\!&
   \lb 1+\lb1+\as\frac{\ts}{4}\rb\alpha\delta(u)\rb 
   |u|^\tsd u&\mbox{in\ }\Om,\\
   \frac{\partial u}{\partial\nu}
   &\!\!=\!\!&
   0&\mbox{on\ }\partial\Om.
   \end{array}\right.\eqno{(\theequation)_{\alpha}}
$$
Conversely, we now check that the solutions of \rf\
are 
solutions of
\rfd, i.e.\ the solutions of \rf\ satisfy
\be\label{igual}
||u||^2=|u|_{\ts}^\ts.
\ee
By multiplying \rf\
by $u$ and integrating 
over $\Om$ we get 
$$
\lb 1+{\txt\frac{\as}{2}}\alpha\delta(u)\rb||u||^2+
{\txt\frac{q\bt}{2}}\alpha|u|_{q}^{q\bt}=
\lb
1+\lb 1+\as{\txt\frac{\ts}{4}}\rb\alpha\delta(u)
\rb|u|_{\ts}^\ts
$$
or
\be\label{eqgamma}
\lb 1+{\txt\frac{\as}{2}}\alpha\delta(u)\rb
{\txt\lb\frac{||u||}{|u|_{\ts}^{\ts\!/2}}\rb^2}+
{\txt\frac{q\bt}{2}}\alpha\delta(u)
{\txt\lb\frac{||u||}{|u|_{\ts}^{\ts\!/2}}\rb^{2-\as}}=
1+\lb 1+\as{\txt\frac{\ts}{4}}\rb\alpha\delta(u).
\ee
Let
$$
c_{1}:=1+{\txt\frac{\as}{2}}\alpha\delta(u),
\qquad
c_{2}:=
{\txt\frac{q\bt}{2}}\alpha\delta(u)
$$
and
$$\gamma:=\frac{||u||}{|u|_{\ts}^{\ts\!/2}}.$$
Equation (\ref{qba}) implies that
$$
c_{1}+c_{2}=1+\lb 1+\as{\txt\frac{\ts}{4}}\rb\alpha\delta(u)
$$
Hence, we can write (\ref{eqgamma}) as 
$$c_{1}\gamma^2+c_{2}\gamma^{2-\as}=c_{1}+c_{2}.$$
Therefore $\gamma$ has to be one,
and the solutions of \rf\ are
solutions of \rfd.

The critical points of $\Psi_{\alpha}$ satisfy
$$
  \left\{\begin{array}{rcll}\!\!\!
   \lb 1+\frac{\as}{2}\alpha\delta(u)\rb
   \frac{(-\Delta u+au)}{\nhu}&&&\\+
   \frac{q\bt}{2}\alpha\delta(u)\frac{|u|^{q-2}u}{|u|_{q}^q}
   &\!\!=\!\!&
   \lb 1+\lb1+\as\frac{\ts}{4}\rb\alpha\delta(u)\rb 
   \frac{|u|^\tsd u}{\ius}&\mbox{in\ }\Om,\\
   \frac{\partial u}{\partial\nu}
   &\!\!=\!\!&
   0&\mbox{on\ }\partial\Om.
   \end{array}\right.
$$
 If $u$ is a critical point
of $\Phi_{\alpha}$, then every nonzero multiple of $u$,
in particular $u$, is a 
critical point of $\Psi_{\alpha}$.  
Conversely, if $u$ is a critical point of
$\Psi_{\alpha}$, then 
$\tau(u)u$ is a critical point of
$\Phi_{\alpha}$.
We are interested in proving existence and nonexistence of 
{\em least 
energy\/} critical points of $\Phi_{\alpha}$,
or equivalently of $\Psi_{\alpha}$.  
We recall that a least energy critical point of 
$\Phi_{\alpha}$
is a function $u\in H^1(\Om)\setminus\{0\}$, such that
$$
\Phi_{\alpha}(u)=\inf_{{\mathcal{N}}}\Phi_{\alpha}=
\inf_{H^1(\Om)\setminus\{0\}}
{\textstyle\frac{1}{N}} (\Psi_{\alpha})^\frac{N}{2}.
$$

\begin{rem} System \rf\ possesses one and only one constant solution
$u\equiv a^\frac{N-2}{4}$.
\end{rem}

Our main result is
\begin{thm}\label{theorem}
    Let $N\geq 5$,
   $\Om$ be a smooth bounded domain in $\Rb^{N}$, 
$a>0$, $\alpha\geq 0$ and $\tf\leq q\leq\tz$. 
There exists a positive real number
$\alpha_{0}=\alpha_{0}(q,a,\Om)$ such that
\begin{enumerate}
    \item[{\rm (i)}] if $\alpha<\alpha_{0}$, then
    $\Psi_{\alpha}$ has a least energy critical point $u_{\alpha}$;
    $\Psi_{\alpha}(u_{\alpha})<\sddn$;
    \item[{\rm (ii)}] if 
    $\alpha>\alpha_{0}$, then $\Psi_{\alpha}$ does not have a least
energy 
    critical point and
    $$
    \sddn=\inf_{H^1(\Om)\setminus\{0\}}\Psi_{\alpha}.
    $$
\end{enumerate}
\end{thm}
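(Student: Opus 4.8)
The plan is to introduce the threshold
$$
\alpha_{0}:=\sup\set{\alpha\ge 0:\Psi_{\alpha}\ \text{has a least energy critical point}}
$$
and to prove the dichotomy by combining a compactness statement below the level $\sddn$ with a blow-up argument that forces $\alpha_{0}$ to be finite. Two elementary observations are used throughout. Since $\delta\ge 0$ and $\delta(v)>0$ for $v\neq 0$, the map $\alpha\mapsto\Psi_{\alpha}(v)$ is strictly increasing for each fixed $v$, so $\alpha\mapsto\inf\Psi_{\alpha}$ is nondecreasing and $\inf\Psi_{\alpha}\to\inf\beta$ as $\alpha\to 0^{+}$. Moreover, testing with a rescaled instanton $U_{\eps,P}$ centred at a boundary point $P\in\partial\Om$ gives $\beta(U_{\eps,P})\to\sddn$, and since $q<\ts$ forces $|U_{\eps,P}|_{q}\to 0$ while the exponent $q\bt$ is positive, also $\delta(U_{\eps,P})\to 0$; hence $\Psi_{\alpha}(U_{\eps,P})\to\sddn$, so $\inf\Psi_{\alpha}\le\sddn$ for every $\alpha\ge 0$.

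\emph{Step 1 (compactness below $\sddn$).} I would first prove: if $\inf\Psi_{\alpha}<\sddn$, then the infimum is attained by a positive function which, by the correspondence of Section~2, is a least energy critical point. Normalize a minimizing sequence by $|u_{n}|_{\ts}=1$; then $(u_{n})$ is bounded in $H^1(\Om)$, so $u_{n}\weak u$, with strong convergence in $L^{2}$ and in $L^{q}$ since $q<\ts$. If $u=0$, then $\delta(u_{n})\to 0$ and $\|u_{n}\|^{2}\to\inf\Psi_{\alpha}$, while a normalized sequence with $u_{n}\weak 0$ necessarily concentrates, forcing $\liminf\|u_{n}\|^{2}=\liminf|\nabla u_{n}|_{2}^{2}\ge\sddn$ (interior concentration would even give $\ge S$), which contradicts $\inf\Psi_{\alpha}<\sddn$. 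Hence $u\neq 0$, and a Brezis--Lieb splitting argument, using again the strict inequality $\inf\Psi_{\alpha}<\sddn$ to exclude loss of $\ts$-mass, yields $|u|_{\ts}=1$ and $\Psi_{\alpha}(u)=\inf\Psi_{\alpha}$. Replacing $u$ by $|u|$, elliptic regularity and the strong maximum principle applied to the Euler equation \rf\ satisfied by $\tau(u)u$ give positivity.

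\emph{Step 2 ($0<\alpha_{0}<+\infty$).} Since $N\ge 5$ and $\max_{\partial\Om}H>0$ for every smooth bounded domain, the sharp boundary expansion $\beta(U_{\eps,P})=\sddn-c\,H(P)\eps+o(\eps)$ (of the type proved in the Appendix and in \cite{AM}) gives $\inf\beta<\sddn$; with the first observation above, $\inf\Psi_{\alpha}<\sddn$ for small $\alpha>0$, so Step~1 produces a least energy critical point and $\alpha_{0}>0$. Finiteness is the crux: argue by contradiction, assuming $\alpha_{0}=+\infty$, so there are $\alpha_{k}\to+\infty$ and least energy critical points $u_{k}$ of $\Psi_{\alpha_{k}}$, which we place on the Nehari manifold so that they solve \rfk, with $\Psi_{\alpha_{k}}(u_{k})=\inf\Psi_{\alpha_{k}}\le\sddn$. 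The sequence cannot be relatively compact in $H^1(\Om)$, since a nonzero strong limit would force $\alpha_{k}\delta(u_{k})\to+\infty$, hence $\Psi_{\alpha_{k}}(u_{k})\to+\infty$; so $(u_{k})$ concentrates, and---because the level is $\le\sddn<S$, excluding interior bubbles and multiple bubbles---the blow-up analysis of Section~3 reduces $u_{k}$, up to a vanishing $H^1$ error, to a single boundary instanton $U_{\eps_{k},P_{k}}$ with $\eps_{k}\to 0$ and $P_{k}\to P_{0}\in\partial\Om$. The sharp estimates of Section~4, built on the mean-curvature expansion of the Appendix, then quantify $\Psi_{\alpha_{k}}(u_{k})$: the negative curvature term $\sim-c\,H(P_{k})\eps_{k}$ in $\beta$ and the positive term $\sim\alpha_{k}\delta(U_{\eps_{k},P_{k}})$ combine to give $\Psi_{\alpha_{k}}(u_{k})>\sddn$ for $k$ large, contradicting $\Psi_{\alpha_{k}}(u_{k})\le\sddn$. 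Hence $0<\alpha_{0}<+\infty$.

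\emph{Step 3 (the dichotomy).} If $\alpha<\alpha_{0}$, choose $\alpha'\in(\alpha,\alpha_{0}]$ carrying a least energy critical point $u'$; since $\delta(u')>0$,
$$
\inf\Psi_{\alpha}\le\Psi_{\alpha}(u')=\beta(u')\bigl(1+\alpha\delta(u')\bigr)<\beta(u')\bigl(1+\alpha'\delta(u')\bigr)=\Psi_{\alpha'}(u')=\inf\Psi_{\alpha'}\le\sddn,
$$
so Step~1 yields a positive least energy critical point $u_{\alpha}$ with $\Psi_{\alpha}(u_{\alpha})=\inf\Psi_{\alpha}<\sddn$; this is (i). If $\alpha>\alpha_{0}$, then $\Psi_{\alpha}$ has no least energy critical point by the definition of $\alpha_{0}$, while $\inf\Psi_{\alpha}<\sddn$ would produce one by Step~1; hence $\inf\Psi_{\alpha}\ge\sddn$, which with $\inf\Psi_{\alpha}\le\sddn$ gives $\sddn=\inf\Psi_{\alpha}$, proving (ii). Letting $\alpha\downarrow\alpha_{0}$ in the inequality $\Psi_{\alpha}(u)\ge\inf\Psi_{\alpha}=\sddn$ (valid for $\alpha>\alpha_{0}$) moreover gives $\Psi_{\alpha_{0}}(u)\ge\sddn$ for all $u$, i.e.\ inequality $(I)_{q}$. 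The main obstacle is the finiteness of $\alpha_{0}$, that is, the sharp estimate $\Psi_{\alpha_{k}}(U_{\eps_{k},P_{k}})>\sddn$: this requires tracking every constant in the competition between the mean-curvature contribution and the $\alpha_{k}$-contribution, and verifying that the remainder coming from $u_{k}-U_{\eps_{k},P_{k}}$ is of strictly lower order.
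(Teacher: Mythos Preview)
Your plan is correct and follows essentially the paper's route: define the threshold, prove compactness below $\sddn$, and rule out $\alpha_0=+\infty$ by a single-bubble boundary blow-up followed by a sharp expansion of $\Psi_{\alpha_k}(u_k)$. Your definition $\alpha_0=\sup\{\alpha:\Psi_\alpha\ \text{has a least energy critical point}\}$ is equivalent to the paper's $\alpha_0=\sup\{\alpha:S_\alpha<\sddn\}$ once Step~1 is in place, and your monotonicity argument in Step~3 is a little cleaner than the paper's detour through continuity of $\alpha\mapsto S_\alpha$.

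The one substantive ingredient you leave unnamed is the mechanism for controlling the remainder $w_k=u_k-C_kU_{\eps_k,y_k}$. In the paper this is \emph{not} merely a matter of ``tracking every constant'': one projects $u_k$ onto the instanton manifold $\mathcal{M}$ and uses a Bianchi--Egnell/Rey spectral-gap lemma (Lemmas~3.7--3.10) for the weighted eigenvalue problem $-\Delta\phi+\nu_\eps U_{\eps,y}^{q\bt-2}\phi=\mu\,U_{\eps,y}^{\ts-2}\phi$ to obtain
\[
|\nabla w_k|_2^2+c\,\alpha_k\!\int U_k^{q\bt-2}w_k^2\ \ge\ (\ts-1+\gamma_1)\!\int U_k^{\ts-2}w_k^2+o(\eps_k)
\]
for $w_k\perp T\mathcal{M}$. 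This positivity of the second variation is what absorbs the negative term $-(\ts-1)\int U_k^{\ts-2}w_k^2$ appearing in the expansion of $\beta(u_k)$; without it, that term is of order $\|w_k\|^2$, which is not a priori $o(\eps_k)$ or $o(\alpha_k\eps_k)$, and the final inequality $\Psi_{\alpha_k}(u_k)>\sddn$ would not follow. You correctly flag the remainder as the main obstacle, but your plan becomes a proof only after this spectral step is supplied.
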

This theorem obviously implies that $\sddn\leq\Psi_{\alpha_{0}}$, 
i.e.,
$$\sddn\nupsq\leq\nhu+\alpha_{0}\qa\uqb,$$
for all 
$u\in H^1(\Om)\setminus\{0\}$.

\begin{rem}\label{lbum}
    It is easy to check that
    $$\Psi_{\alpha}(1)=a|\Om|^\frac{2}{N}
    \lb 1+\frac{\alpha}{a^\frac{2-\as}{2}|\Om|^{1-\bt}}\rb.
    $$
    So, if
    $$
    a\leq\frac{S}{(2|\Om|)^\frac{2}{N}},
    $$
    the least energy critical points of $\Psi_{\alpha}$ might
    be constant for $\alpha$ such that
    $\Psi_{\alpha}(1)\leq\sddn$, i.e.
    $$\alpha\leq|\Om|^{1-\bt} \cdot
    \frac{{S}/{\lb 2|\Om|\rb^\frac{2}{N}}-a}{a^{\as/2}}.
    $$
    This simple observation yields the following lower bound for
    $\alpha_{0}$: $$\alpha_{0}\geq
    |\Om|^{1-\bt} \cdot
    \frac{{S}/{\lb 2|\Om|\rb^\frac{2}{N}}-a}{a^{\as/2}}.
    $$
    A second lower bound for $\alpha_{0}$ is given in\/
   {\rm Lemma~\ref{lbdois}}.
\end{rem}
\begin{rem}
    Let $\kappa>0$.  By scaling, we easily check that
    $$\alpha_{0}\lb q, a\,\kappa^2,\frac{\Om}{\kappa}\rb=\kappa\,
    \alpha_{0}(q,a,\Om).$$
    In fact, if $u\in H^1(\Om)$ and $v:\frac{\Om}{\kappa}
    \to\Rb$ is defined by
    $v(x)=\kappa^\frac{N-2}{2}u(\kappa x)$, then
    $v\in H^1\lb\frac{\Om}{\kappa}\rb$ satisfies
$$\begin{array}{lcl}   
    \kappa^2|v|_{L^2\lb\frac{\Om}{\kappa}\rb}^{2}&=&|u|_{2}^{2},\\
    \kappa|v|_{L^q\lb\frac{\Om}{\kappa}\rb}^{q\bt}&=&
    |u|_{q}^{q\bt},\\
    |v|_{L^\ts\lb\frac{\Om}{\kappa}\rb}&=&|u|_{\ts},\\
    |\nabla v|_{L^2\lb\frac{\Om}{\kappa}\rb}&=&|\nabla u|_{2}.
\end{array}$$
\end{rem}

\section{Asymptotic behavior of least energy 
critical points}

We consider the minimization problem 
corresponding to
$$S_{\alpha}:=\inf_{
H^1(\Om)\setminus\{0\}}\Psi_{\alpha}.$$
From Adimurthi and Mancini \cite{AM} and X.J.\ Wang~\cite{WX},
we know that 
\be\label{szero}
0<S_{0}<\sddn.
\ee
Obviously, $S_{\alpha}$ is 
nondecreasing as $\alpha$ increases.  Choose any point
$P\in\partial\Om$.
By testing $\Psi_{\alpha}$ with $U_{\eps,P}$ and letting $\eps\to 0$,
we 
conclude that $S_{\alpha}\leq\sddn$ for all $\alpha\geq 0$.

\begin{rem}\label{pscondition}
    If $S_{\alpha}<\sddn$, then $S_{\alpha}$ is achieved.
\end{rem}
    We can assume the minimizer is a
    nonnegative function.  In fact, by the maximum principle, a 
    nonnegative minimizer is positive in $\Om$.
\begin{rem} 
    The map $\alpha\mapsto S_{\alpha}$ is continuous on $[0,+\infty[$.
\end{rem}
The proof of this remark is similar to the one of Lemma~3.2 of
\cite{G}.

By the previous remark, the value
\be\label{min}
\alpha_{0}:=\sup\left\{\alpha\in\Rb:S_{\alpha}<\sddn\right\}
\ee
is well defined.  By (\ref{szero}) it is not zero.
Remark~\ref{pscondition} implies
\begin{rem}\label{coro}
    The map $\alpha\mapsto S_{\alpha}$ is strictly increasing on 
    $[0,\alpha_{0}]$.
    If $\alpha\in]\alpha_{0},+\infty[$, then $\Psi_{\alpha}$ does 
    not have a least energy critical point.
\end{rem}

Therefore, to prove Theorem~\ref{theorem}
we just have to establish that $\alpha_{0}$ is finite.  
Arguing by contradiction,
we assume that the value $\alpha_{0}$ in 
Theorem~\ref{theorem} is infinite and analyze the asymptotic 
behavior of least energy critical points as $\alpha\to+\infty$.

\begin{lem}\label{infinito}  The limit of $S_{\alpha}$ as
    $\alpha$ tends to $+\infty$ is
    \be\label{salpha} \lim_{\alpha\to+\infty}S_{\alpha}=\sddn.\ee  
    Suppose $S_{\alpha}<\sddn$ for all $\alpha\geq 0$.  Choose a
    sequence $\alpha_{k}\to+\infty$ as $k\to+\infty$ and let
    $\uk$ be a minimizer for $\Psi_{\alpha_{k}}$ satisfying 
    \rfk.  
    The sequence $(\uk)$ satisfies
     $$\uk\weak 0\mbox{\ in\ }H^1(\Om),$$
       \be\label{sndd}
       \lim_{k\to\infty}\ngukpq=\lim_{k\to\infty}\nukpss=\sndd
       \ee
       and
       \be\label{limakdk}
    \lim_{k\to\infty}\ak\delta(\uk)=
    0.
    \ee     
    If we denote by 
    \ba\mk&:=&\max_{\bar\Om}\uk\label{mekp}\\
    \noalign{\noindent\mbox{and}}
    \beps_{k}&:=& M_{k}^{-\frac{2}{N-2}},\label{beps}\\
    \noalign{\noindent\mbox{then}}
    \label{mtoinfty}\mk&\to&+\infty\\
    \noalign{\noindent\mbox{and}}
    \label{akdk}\ak\beps_{k}&\to&0,\ea
    as $k\to\infty$.
\end{lem}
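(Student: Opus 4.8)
The plan is to extract all six assertions from two identities valid along the given sequence, plus one concentration--compactness fact. First I would record the setup. Since $S_{\ak}<\sddn$, the infimum $S_{\ak}$ is attained (Remark~\ref{pscondition}); replacing a minimizer $v_{k}$ by $\tau(v_{k})v_{k}$ I may assume the $\uk$ solve \rfk and are positive in $\Om$, so by (\ref{igual}) they lie on the Nehari manifold $\mathcal{N}$: $||\uk||^{2}=|\uk|_{\ts}^{\ts}$. On $\mathcal{N}$ one has $|\uk|_{\ts}^{2}=(||\uk||^{2})^{2/\ts}$, hence (using $2-4/\ts=4/N$) $\Psi_{0}(\uk)=||\uk||^{4/N}$, $\delta(\uk)=|\uk|_{q}^{q\bt}/||\uk||^{2}$, and
$$
S_{\ak}=\Psi_{\ak}(\uk)=||\uk||^{4/N}\bigl(1+\ak\delta(\uk)\bigr),\qquad
S_{0}\le||\uk||^{4/N}\le S_{\ak}\le\sddn ,
$$
the left inequality being $\Psi_{0}(\uk)\ge S_{0}$. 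The second, purely algebraic, identity is that the very definition (\ref{bt}) of $\bt$ gives $(\ts-q)\bt=\frac{2}{N-2}$, i.e.\ $\mk^{-(\ts-q)\bt}=\beps_{k}$.

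Next I would deduce $\uk\weak0$. From the displayed bounds $(\uk)$ is bounded in $H^{1}(\Om)$ and $1+\ak\delta(\uk)=S_{\ak}/||\uk||^{4/N}\le\sddn/S_{0}$, so $\ak\to+\infty$ forces $\delta(\uk)\to0$, whence $|\uk|_{q}^{q\bt}=||\uk||^{2}\delta(\uk)\to0$ and $|\uk|_{q}\to0$. Since $q<\ts$, the embedding $H^{1}(\Om)\hookrightarrow L^{q}(\Om)$ is compact, so every subsequential weak limit of $(\uk)$ is $0$; hence $\uk\weak0$. By Rellich $\uk\to0$ in $L^{2}(\Om)$, so $a|\uk|_{2}^{2}\to0$, $|\nabla\uk|_{2}^{2}=||\uk||^{2}+o(1)$, and with $||\uk||^{2}=|\uk|_{\ts}^{\ts}$ the three quantities $|\nabla\uk|_{2}^{2},\ ||\uk||^{2},\ |\uk|_{\ts}^{\ts}$ share the same subsequential limits.

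To identify that limit I would use concentration--compactness. Normalise $\tilde u_{k}:=\uk/|\uk|_{\ts}$, which is legitimate since $|\uk|_{\ts}^{\ts}=||\uk||^{2}\ge S_{0}^{N/2}>0$; then $\tilde u_{k}\weak0$, $|\tilde u_{k}|_{\ts}=1$, and by the concentration--compactness principle~\cite{L1} (in the boundary form used in~\cite{AM}) a sequence vanishing weakly in $H^{1}(\Om)$ cannot concentrate below the half-space Sobolev quotient $\sddn$, so $\liminf_{k}|\nabla\tilde u_{k}|_{2}^{2}\ge\sddn$. Hence along any subsequence with $||\uk||^{2}\to m$ (note $m\ge S_{0}^{N/2}>0$) one gets $m/m^{2/\ts}\ge\sddn$, i.e.\ $m\ge\lb\sddn\rb^{N/2}=\sndd$; since also $||\uk||^{2}\le\lb\sddn\rb^{N/2}$ from the sandwich, $m=\sndd$. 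As the subsequence was arbitrary this gives (\ref{sndd}); in particular $||\uk||^{4/N}\to\sddn$, so $S_{\ak}\to\sddn$, and as $S_{\alpha}$ is non-decreasing in $\alpha$, bounded by $\sddn$, and this construction applies whenever $S_{\alpha}<\sddn$ for all $\alpha$, one obtains $\lim_{\alpha\to+\infty}S_{\alpha}=\sddn$. Finally $1+\ak\delta(\uk)=S_{\ak}/||\uk||^{4/N}\to1$ gives (\ref{limakdk}).

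The blow-up is then easy. If $(\mk)$ had a bounded subsequence, on it $|\uk|_{\ts}^{\ts}=\int\uk^{\ts-2}\uk^{2}\le\mk^{\ts-2}|\uk|_{2}^{2}\to0$, contradicting $|\uk|_{\ts}^{\ts}\to\sndd>0$; hence $\mk\to+\infty$ and $\beps_{k}=\mk^{-2/(N-2)}\to0$, which is (\ref{mtoinfty}). For (\ref{akdk}): from $0\le\uk\le\mk$ and $q<\ts$, $|\uk|_{\ts}^{\ts}=\int\uk^{\ts-q}\uk^{q}\le\mk^{\ts-q}|\uk|_{q}^{q}$, so $|\uk|_{q}^{q}\ge\mk^{-(\ts-q)}|\uk|_{\ts}^{\ts}\ge c\,\mk^{-(\ts-q)}$ for large $k$ with $c=\sndd/2$; raising to the power $\bt>0$ and using $\mk^{-(\ts-q)\bt}=\beps_{k}$ yields $|\uk|_{q}^{q\bt}\ge c^{\bt}\beps_{k}$, so
$$
0\le\ak\beps_{k}\le c^{-\bt}\,\ak|\uk|_{q}^{q\bt}=c^{-\bt}\,||\uk||^{2}\,\ak\delta(\uk)\longrightarrow0
$$
by (\ref{sndd}) and (\ref{limakdk}). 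The one substantial step is the concentration--compactness estimate of the third paragraph: it is the only place the geometry of $\Om$ enters and where the constant $\sddn=S/2^{2/N}$ is produced; granted it, everything else is bookkeeping with \rfk and the exponents $\as,\bt$.
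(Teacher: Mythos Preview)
Your proof is correct and follows essentially the same route as the paper's: boundedness from the Nehari identity $||u_{k}||^{2}=|u_{k}|_{\ts}^{\ts}$, weak convergence to $0$ from $|u_{k}|_{q}\to0$, identification of the limit $\sndd$ via the concentration--compactness inequality $\sddn\,||\nu||^{2/\ts}\le||\mu||$, and the estimate $|u_{k}|_{q}^{q\bt}\ge\beps_{k}|u_{k}|_{\ts}^{\ts\bt}$ for the blow-up conclusions. The only cosmetic differences are that you normalise before invoking concentration--compactness and separate the $M_{k}\to\infty$ argument from the $\ak\beps_{k}\to0$ argument, whereas the paper works directly with the limit measures $\mu,\nu$ and obtains both blow-up facts at once.
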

\begin{proof}
    Suppose $S_{\alpha}<\sddn$ for all $\alpha\geq 0$ and choose a
    sequence $\alpha_{k}\to+\infty$ as $k\to+\infty$.  Let $\uk$ 
    be a minimizer for $\Psi_{\alpha_{k}}$ satisfying 
    \rfk, which necessarily exists by 
    Remark~\ref{pscondition} and rescaling.  The functions $\uk$
    satisfy
    $$
    \frac{||\uk||^2}{|u_{k}|_{\ts}^2}=\beta(\uk)<
    \Psi_{\alpha_{k}}(u_{k})<\sddn
    $$
    and 
    \be\label{igualk}
    ||\uk||^2=|\uk|_{\ts}^\ts,
    \ee
    because of (\ref{igual}).  Together,
    $$
    ||\uk||^\frac{4}{N}<\sddn,
    $$
    the sequence $\uk$ is bounded in $H^1(\Om)$.  
    
    The definition of $\Psi_{\alpha}$
    (equality (\ref{psiu})) and
    (\ref{igualk})
    imply that
    $$\ak\frac{||u_{k}||^\as|u_{k}|_{q}^{q\bt}}{
    |u_{k}|_{\ts}^{2+\ts\as/2}}
    =\ak\frac{|u_{k}|_{q}^{q\bt}}{
    |u_{k}|_{\ts}^{2}}
    <\sddn,$$
    for all positive integers $k$.  If we combine this inequality
    with the fact that the norms $|u_{k}|_{\ts}$ 
    are uniformly bounded we deduce that 
    $\uk\weak 0$ in $H^1(\Om)$.
    We can assume that $\uk\to 0$ a.e.\ on 
    $\Om$, and $|\nabla\uk|^2\weak\mu$ and
    $|\uk|^\ts\weak\nu$ in the sense of measures on $\bar\Om$.  
    So,
    $$
    \lim_{k\to\infty}\ngukpq=||\mu||
    $$
    and
    $$
    \lim_{k\to\infty}\nukpss=||\nu||,
    $$
    where 
    $$
    \sddn||\nu||^\frac 2\ts\leq||\mu||.
    $$
    Now equality (\ref{salpha}) follows from
    \be\label{auxu}
    \sddn
    \leq\frac{||\mu||}{||\nu||^\frac 2\ts}
    =\lim_{k\to\infty}\beta(u_{k})
    \leq\lim_{k\to\infty}\Psi_{\alpha_{k}}(u_{k})
    =\lim_{k\to\infty}S_{\ak}
    \leq\sddn.
    \ee
    Taking the limit of both sides of (\ref{igualk}) as
    $k\to+\infty$,
     \be\label{auxd}||\nu||=||\mu||.\ee
    Combining (\ref{auxu}) and (\ref{auxd}),
    $$
    ||\mu||=||\nu||=\sndd,
    $$
    or (\ref{sndd}).
    
    Equalities (\ref{sndd}) imply there exists a 
    constant $c$ such that 
    \be\label{c}
    |\uk|_{\ts}\geq c>0,
    \ee
    for all positive integers $k$.
    Another consequence of (\ref{auxu}) is that
    $\lim_{k\to\infty}\beta(u_{k})=\sddn$
    and so
    $$\lim_{k\to\infty}\ak\beta(\uk)\delta(\uk)=0.$$
    However,
    $$\lim_{k\to\infty}\ak\beta(\uk)\delta(\uk)=
    \sddn\lim_{k\to\infty}\ak\delta(\uk).$$ 
    Equality (\ref{limakdk}) follows.
   
    Combining (\ref{limakdk}), 
    $$\ak\delta(\uk)=
    \ak\frac{|u_{k}|_{q}^{q\bt}}{
    |u_{k}|_{\ts}^{2}}$$
    and the fact that the norms $|\uk|_{\ts}$
    are uniformly bounded, we also get
    \be\label{aq}
    \lim_{k\to\infty}\ak|u_{k}|_{q}^{q\bt}=0.
    \ee
    But, from (\ref{bt}),
    \bas
    |u_{k}|_{q}^{q\bt}&=&\lb\int u_{k}^q\rb^\bt\\
    &=&M_{k}^{q\bt}\left[\int\lb\frac{u_{k}}
    {M_{k}}\rb^q\right]^\bt\\
    &\geq&M_{k}^{q\bt}\left[\int\lb\frac{u_{k}}
    {M_{k}}\rb^\ts\right]^\bt\\
    &=&M_{k}^{(q-\ts)\bt}\lb\int u_{k}^\ts\rb^\bt\\
    &=&M_{k}^{-\frac{2}{N-2}}|u_{k}|_{\ts}^{\ts\bt}\\
    &=&\beps_{k}|u_{k}|_{\ts}^{\ts\bt}.
    \eas
    This, (\ref{c}) and (\ref{aq}) imply
    (\ref{mtoinfty}) and (\ref{akdk}).
    \end{proof}    
\begin{rem}\label{akfinito}
    Suppose that $\ak$ converges to a positive real number and
    $S_{\alpha_{k}}\nearrow\sddn$.
    Let $\uk\in H^1(\Om)$ be a 
    minimizer for $\Psi_{\alpha_k}$ satisfying 
    \rfk and suppose $u_{k}\weak 0$ in $H^1(\Om)$.  The previous 
    argument shows that\/ {\rm (\ref{sndd})}, 
    {\rm (\ref{limakdk})}, {\rm (\ref{mtoinfty})} and\/
    {\rm (\ref{akdk})} hold.
\end{rem}
\begin{lem}\label{instantao}  
       Suppose $S_{\ak}<\sddn$ and either $\ak\to+\infty$, 
       or the hypothesis of\/ {\rm Remark~\ref{akfinito}} hold.
       Let $\uk\in H^1(\Om)$ be a positive minimizer for
$\Psi_{\alpha_k}$ 
       satisfying \rfk.  
       Then
       \be\label{convum}
       \lim_{k\to\infty}|\nabla\uk-\nabla U_{\beps_{k},P_{k}}|_{2}=0
       \ee
       and $\pkk\in\partial\Om$, for large $k$, where
       $P_{k}$ is such that $\uk(\pkk)=M_{k}$, and 
       $M_{k}$ and $\beps_{k}$ are as
       in\/ {\rm (\ref{mekp})} and\/ {\rm (\ref{beps})}, respectively.
       \end{lem}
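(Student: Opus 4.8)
The plan is to carry out a blow-up analysis at the maximum points $P_{k}$, in the spirit of Adimurthi--Pacella--Yadava~\cite{APY} and Adimurthi--Mancini~\cite{AM}. First I would rescale: set $\vk(y):=M_{k}^{-1}\uk(P_{k}+\beps_{k}y)$ on the dilated domain $\Om_{k}:=\beps_{k}^{-1}(\Om-P_{k})$, so that $0\leq\vk\leq 1$, $\vk(0)=1=\max\vk$, and $\partial\vk/\partial\nu=0$ on $\partial\Om_{k}$. Writing $\theta_{k}:=\ak\delta(\uk)$, dividing the Euler equation~\rfk\ by $1+\frac{\as}{2}\theta_{k}$, and using $\beps_{k}=M_{k}^{-2/(N-2)}$ together with $\ts-1=\frac{N+2}{N-2}$, one turns the equation into
$$-\Delta\vk+a\beps_{k}^{2}\vk+b_{k}\vk^{q-1}=(1+o(1))\,\vk^{\tsu}\qquad\mbox{in\ }\Om_{k},$$
with homogeneous Neumann data, where $b_{k}=\frac{q\bt}{2}\bigl(1+\frac{\as}{2}\theta_{k}\bigr)^{-1}\ak\,|\uk|_{q}^{q(\bt-1)}M_{k}^{q-\ts}$. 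A short computation from (\ref{bt}) gives $\beps_{k}^{-1/\bt}=M_{k}^{\ts-q}$, so that $|\uk|_{q}^{q(\bt-1)}M_{k}^{q-\ts}=|\uk|_{q}^{q\bt}\,|\uk|_{q}^{-q}M_{k}^{q-\ts}\leq c^{-\ts}\,|\uk|_{q}^{q\bt}$ by the estimate $|\uk|_{q}^{q\bt}\geq\beps_{k}|\uk|_{\ts}^{\ts\bt}$ of Lemma~\ref{infinito} and (\ref{c}); combined with (\ref{aq}) this gives $b_{k}\to 0$, while $\theta_{k}\to 0$ by (\ref{limakdk}) and $\ak\beps_{k}\to 0$ by (\ref{akdk}). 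Thus the coefficients of the equation for $\vk$ converge to those of $-\Delta v=v^{\tsu}$.

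Next I would pass to the limit. Since $0\leq\vk\leq 1$ and the right-hand side above is bounded, elliptic $L^{p}$ and Schauder estimates (interior, and at $\partial\Om_{k}$ after flattening the boundary by a diffeomorphism converging to the identity) give, along a subsequence, $\vk\to v$ in $C^{2}_{\mathrm{loc}}$. If $d(P_{k},\partial\Om)/\beps_{k}\to+\infty$, then $\Om_{k}$ exhausts $\Rb^{N}$ and $v$ solves $-\Delta v=v^{\tsu}$ on $\Rb^{N}$ with $v(0)=1=\max v$, so $v=U$ by the Caffarelli--Gidas--Spruck classification; but then $\int_{\Om_{k}}|\nabla\vk|^{2}=\ngukpq$ would force $\liminf_{k}\ngukpq\geq\int_{\Rb^{N}}|\nabla U|^{2}=S^{N/2}$, contradicting $\ngukpq\to\sndd<S^{N/2}$ from (\ref{sndd}). (Equivalently, by concentration--compactness $\mu$ and $\nu$ of Lemma~\ref{infinito} concentrate at a single point, necessarily on $\partial\Om$, an interior atom being excluded because it would force $\|\mu\|\geq S^{N/2}/2^{(N-2)/N}>\sndd$.) Hence $d(P_{k},\partial\Om)/\beps_{k}$ is bounded; passing to a subsequence and rotating, $\Om_{k}\to\ttt$ and $v$ solves $-\Delta v=v^{\tsu}$ on $\ttt$ with Neumann data. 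Even reflection across $\partial\ttt$ yields an entire solution, again equal to $U$; since $\vk(0)=1=\max\vk$ with $0\in\Om_{k}$ at distance $d(P_{k},\partial\Om)/\beps_{k}$ from $\partial\Om_{k}$, the reflected limit has a unique maximum only if that distance tends to $0$, so $d(P_{k},\partial\Om)=o(\beps_{k})$ and $v=U$ centred at $0\in\partial\ttt$.

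To place the maximum exactly on $\partial\Om$, I would flatten $\partial\Om_{k}$ near $0$ and reflect $\vk$ evenly across the flattened boundary, obtaining functions $\hat\vk$, even in the normal variable, that converge in $C^{2}$ near $0$ to $U$, which has a nondegenerate maximum at $0$; hence for $k$ large $\hat\vk$ has exactly one critical point near $0$, and by the evenness it lies on the boundary hyperplane. If $P_{k}$ were interior for infinitely many $k$, its rescaled image would give a critical point of $\hat\vk$ off that hyperplane together with its mirror image, contradicting uniqueness; so $P_{k}\in\partial\Om$ for $k$ large. Finally I would obtain (\ref{convum}): with $\wk:=\uk-U_{\beps_{k},P_{k}}$,
$$|\nabla\wk|_{2}^{2}=\ngukpq-2\int_{\Om}\nabla\uk\cdot\nabla U_{\beps_{k},P_{k}}+\int_{\Om}|\nabla U_{\beps_{k},P_{k}}|^{2}.$$
By (\ref{sndd}) the first term tends to $\sndd$; since $P_{k}\in\partial\Om$ and $\partial\Om$ is smooth, the substitution $y=(x-P_{k})/\beps_{k}$ gives $\int_{\Om}|\nabla U_{\beps_{k},P_{k}}|^{2}=\int_{\Om_{k}}|\nabla U|^{2}\to\tfrac12\int_{\Rb^{N}}|\nabla U|^{2}=\sndd$, and the same substitution with the $C^{1}_{\mathrm{loc}}$ convergence $\vk\to U$, the decay of $\nabla U$, and the bound $\int_{\Om_{k}}|\nabla\vk|^{2}=\ngukpq=O(1)$ gives $\int_{\Om}\nabla\uk\cdot\nabla U_{\beps_{k},P_{k}}=\int_{\Om_{k}}\nabla\vk\cdot\nabla U\to\int_{\ttt}|\nabla U|^{2}=\sndd$. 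Hence $|\nabla\wk|_{2}^{2}\to\sndd-2\sndd+\sndd=0$.

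The main obstacle is the third step: pinning the concentration exactly on $\partial\Om$ requires the full $C^{2}$ (not merely $C^{1}$) convergence of the reflected blow-up and the nondegeneracy of $U$ at its peak, and the even-reflection argument must be carried carefully through the boundary-flattening diffeomorphism, as in~\cite{APY}; the companion point --- that no residual bubble or slowly decaying tail survives in the last expansion --- likewise rests on the strong local convergence together with the decay of $U$. By contrast, the bookkeeping of the first step ($b_{k}\to 0$, $\theta_{k}\to 0$) and the energy identity of the last step are routine consequences of Lemma~\ref{infinito}.
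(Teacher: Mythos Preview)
Your proposal is correct and follows essentially the same blow-up scheme as the paper: rescale at the maximum point, show the lower-order coefficients vanish (your $b_k\to 0$ is equivalent to the paper's use of (\ref{akdk}), (\ref{dvu}) and (\ref{lbq})), rule out interior concentration by the energy gap $S^{N/2}$ versus $S^{N/2}/2$, flatten the boundary and pass to the limit, and finally expand $|\nabla(u_k-U_{\beps_k,P_k})|_2^2$ to get (\ref{convum}). The one genuine variation is the step $P_k\in\partial\Om$: the paper argues by the mean value theorem---if $P_k$ were interior then $\partial\tilde w_k/\partial x_N$ vanishes both at $0$ and on the flattened boundary, forcing $\partial^2\tilde w_k/\partial x_N^2$ to vanish somewhere in between, contradicting $\partial^2 U/\partial x_N^2(0)<0$---whereas you use even reflection together with the nondegeneracy of $U$ at its peak to get uniqueness of the nearby critical point. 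Both arguments are standard and essentially equivalent; the paper's is slightly more direct since it avoids checking that the even extension is $C^2$ across the flattened boundary.
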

\begin{proof}  
     We use the Gidas and Spruck blow up technique~\cite{GS}.
     Let $\Om_{k}:=(\Om-\pkk)/\beps_{k}$ and $v_{k}:\Om_{k}\to\Rb$
     be defined by
     $v_{k}(x):=\beps_{k}^\ndd \uk(\beps_{k} x+\pkk)$.
     We can assume that $P_{k}\to P_{0}$ and
     $
     \ \Om_{k}\to\Om_{\infty}$.
     We let $L=\lim_{k\to+\infty}
     \mbox{dist\,}(P_{k},\partial\Om)/
     \beps_{k}\in[0,+\infty]$. 

     From
     \bas
         |v_{k}|_{L^q(\Om_{k})}^{q\bt}&=&       
         \beps_{k}^{\frac{N-2}{2}q\bt}\beps_{k}^{-N\bt}|u_{k}|_{q}^{q\bt}\\
         &=&\beps_{k}^{-1}|u_{k}|_{q}^{q\bt},
     \eas
     we deduce that 
     \be\label{dvu}
     \delta(u_{k})=\beps_{k}\delta(v_{k}),
     \ee where the norms in
     $\delta(v_{k})$ are computed in $\Om_{k}$.  Also,
     \bas
        |v_{k}|_{L^q(\Om_{k})}^{q(\bt-1)}v_{k}^{q-1}(x)&=&
        \beps_{k}^{\frac{N-2}{2}q(\bt-1)}\beps_{k}^{-N(\bt-1)}
        \beps_{k}^{\frac{N-2}{2}(q-1)}|u_{k}|_{q}^{q(\bt-1)}u_{k}^{q-1}
        (\beps_{k}x+P_{k})\\
        &=&\beps_{k}^\frac{N}{2}|u_{k}|_{q}^{q(\bt-1)}u_{k}^{q-1}
        (\beps_{k}x+P_{k}).
     \eas
     Thus,
     \bas
     \Delta v_{k}(x)&=&\beps_{k}^\frac{N+2}{2}
     \Delta u_{k}(\beps_{k} x+\pkk)\\
     v_{k}^{\ts-1}(x)&=&\beps_{k}^\frac{N+2}{2}
     u_{k}^{\ts-1}(\beps_{k} x+\pkk)\\
     \beps_{k}^2 v_{k}(x)&=&\beps_{k}^\frac{N+2}{2}
     u_{k}(\beps_{k} x+\pkk)\\
     \beps_{k}|v_{k}|_{L^q(\Om_{k})}^{q(\bt-1)}
     v_{k}^{q-1}(x)&=&
     \beps_{k}^\frac{N+2}{2}|u_{k}|_{q}^{q(\bt-1)}
     u_{k}^{q-1}(\beps_{k} x+\pkk)
     \eas
     The functions $v_{k}$ satisfy
\be\label{return}
  \left\{\begin{array}{rcll}
   \lb 1+\frac{\as}{2}\beps_{k}\alpha_{k}\delta(v_{k})\rb
   (-\Delta v_{k}+a\beps_{k}^2v_{k})&&&\\+
   \frac{q\bt}{2}\beps_{k}\alpha_{k} 
   |v_{k}|_{L^q(\Om_{k})}^{q(\bt-1)}v_{k}^{q-1}&&&\\ -
   \lb 1+\lb1+\as\frac{\ts}{4}\rb\beps_{k}\alpha_{k}\delta(v_{k})\rb 
   v_{k}^\tsu&=&0&\mbox{in\ }\Om_{k},\\
    0<v_{k}\leq v_{k}(0)&=&
    1&\mbox{in\ }\Om_{k},\\
   \frac{\partial v_{k}}{\partial\nu}
   &=&
   0&\mbox{on\ }\partial\Om_{k}.
   \end{array}\right.
   \ee
   
   Suppose that $L=+\infty$.  Then $\Om_{\infty}=\Rb^{N}$.
   We use (\ref{limakdk}), (\ref{akdk})
   (which obviously implies $\beps_{k}\to 0$),
   (\ref{dvu}) and
   \be\label{lbq}
   |v_{k}|_{L^q(\Om_{k})}^q=\int_{\Om_{k}}
   v_{k}^q\geq\int_{\Om_{k}}v_{k}^\ts=\int u_{k}^\ts=
   |u_{k}|_{\ts}^\ts\geq c^\ts,
   \ee
   (from (\ref{c})).
   By the elliptic estimates in~\cite{ADN}, 
   $$\vk\to v\mbox{\ in\ } 
   C^2_{\mbox{\tiny loc}}(\Om_{\infty})$$
   where $v$ satisfies
   $$
   \left\{\begin{array}{ll}
   -\Delta v=v^\tsu&\mbox{in\ 
   }\Om_{\infty},
   \\
   0<v\leq v(0)=1&\mbox{in\ }\Om_{\infty}.
   \end{array}\right.
   $$
            By lower 
            semicontinuity of the norm, $v\in L^\ts(\Om_{\infty})$ and
            $\nabla v\in L^2(\Om_\infty)$.  
            Therefore
            $v=U$.  
            From (\ref{sndd}),
            $$
            S^\frac N2=\int_{\Rb^{N}}|\nabla 
            U|^2\leq\lim_{k\to\infty}|\nabla\uk|_{2}^2=\sndd,$$
            which is impossible.  

     So $L$ is finite.  This implies that $P_{0}\in\partial\Om$.
     Without loss of generality, we assume that $P_{0}=0$ and
     that in a neighborhood 
     $B_{R}(0)=\{x\in\Rb^{N}: |x|<R\}$ of 0 the sets $\Om$
     and $\partial\Om$ are described by
     \bas
     \Om\cap B_{R}(0)&=&\{(x',x_{N})\in B_{R}(0)|\,x_{N}>g(x')\},\\
     \partial\Om\cap B_{R}(0)&=&\{(x',x_{N})\in B_{R}(0)|
     \,x_{N}=g(x')\},    
     \eas
     where $g:B_{R}(0)\cap\{(0,x_{N})|\, x_{N}\in\Rb\}\to\Rb$ is such
     that $g(0)=0$ and $\nabla g(0)=0$.  We make the change of
     coordinates associated to
     the map $\psi=(\psi_{1},\ldots,\psi_{N}):B_{R}(0)\to\Rb^{N}$,
with
     \bas
     \psi_{i}(x)&=&x_{i}
     -\frac{g(x')-x_{N}}{1+|\nabla g(x')|^2}\cdot
     \frac{\partial g}{\partial x_{i}}(x'),\ \ \mbox{for\ }1\leq i\leq N-1,\\
     \psi_{N}(x)&=&x_{N}-g(x').
     \eas
     The determinant of the Jacobian of $\psi$ at 0 is 1.
     We can choose $R_{0}>0$ and an open neighborhood 
     $V\subset B_{R}(0)$ of zero, such that 
     $$\begin{array}{l}
     \psi:V\to B_{R_{0}}(0)\mbox{\ is a diffeomorphism},\\
     \psi:\Omega\cap V\to
     B_{R_{0}}(0)_{+}:=\{(y',y_{N})\in B_{R_{0}}|\, y_{N}>0\},\\
     \psi:\partial\Omega\cap V\to\{(y',y_{N})\in B_{R_{0}}|\, y_{N}=0\}.
     \end{array}$$
     If $u:V\to\Rb$ is smooth and $v:B_{R_{0}}(0)_{+}\to\Rb$ is such that
     $v(y)=u(\psi^{-1}(y))$, then
     $$
     (\Delta u)(\psi^{-1}(y))=\sum_{i,j=1}^{N}
     a_{i,j}(y)\frac{\partial^2v}
     {\partial y_{i}\partial y_{j}}(y)
     +\sum_{i=1}^{N}b_{i}(y)\frac{\partial v}
     {\partial y_{i}}(y),
     $$
     $$
     \frac{\partial u}{\partial\nu}(\psi^{-1}(y))=d(y)
     \frac{\partial v}{\partial y_{N}}(y)\mbox{\ \ on\ }y_{N}=0,
     $$
     with $a_{i,j}$, $b_{i}$ and $d$ smooth functions,
     $$
     a_{i,j}(y)=\delta_{i,j}+O(|y|)
     $$
     and
     $$
     d(y)=1+|(\nabla g)[(\psi^{-1}(y))']|^2\geq 1.
     $$
     As above, $(\psi^{-1}(y))'$ denotes the first $N-1$ 
     coordinates of $\psi^{-1}(y)$.
     We let $Q_{k}=\psi(P_{k})$ and denote by
     $(Q_{k})_{N}$ the $N$-th coordinate of $Q_{k}$.
     We also let $B_{k}=(B_{R_{0}}(0)_{+}-Q_{k})/\beps_{k}$.
     We define 
     $w_{k}:B_{R_{0}}(0)_{+}\to\Rb$ by
     $w_{k}(y)=u_{k}(\psi^{-1}(y))$
     and
     $\tilde{w}_{k}:B_{k}\to\Rb$ by
     $\tilde{w}_{k}(x)=\beps_{k}^\frac{N-2}{2}w_{k}(\beps_{k}x+Q_{k})$.
     The functions $\tilde{w}_{k}$ satisfy
$$
  \left\{\begin{array}{rcl}\!\!\!
   \lb 1+\frac{\as}{2}\beps_{k}\alpha_{k}
   \delta(v_{k})\rb\times
   \qquad\qquad\qquad\qquad\qquad\qquad\ \ \ &&\\
   \lb-
   {\txt\sum_{i,j=1}^{N}}\tilde{a}_{i,j,k}
   \frac{\partial^2\tilde{w}_{k}}
     {\partial x_{i}\partial x_{j}}
     -{\txt\sum_{i=1}^{N}}\beps_{k}\tilde{b}_{i,k}
     \frac{\partial\tilde{w}_{k}}
     {\partial x_{i}}
   +a\beps_{k}^2\tilde{w}_{k}
   \rb&&\\+
   \frac{q\bt}{2}\beps_{k}\alpha_{k} 
   |v_{k}|_{L^q(\Om_{k})}^{q(\bt-1)}
   \tilde{w}_{k}^{q-1}&&\\   
   -\lb 1+\lb1+\as\frac{\ts}{4}\rb\beps_{k}\alpha_{k}
   \delta(v_{k})\rb 
   \tilde{w}_{k}^\tsu
   &\!\!\!=\!\!\!&0
   \mbox{\ \ in\ }B_{k},\\
    0<\tilde{w}_{k}\leq\tilde{w}_{k}(0)&\!\!\!=\!\!\!&
    1\mbox{\ \ in\ }B_{k},\\
   \frac{\partial\tilde{w}_{k}}{\partial x_{N}}
   &\!\!\!=\!\!\!&
   0\mbox{\ \ on\ }\mbox{\b{$\partial$}} B_{k},
   \end{array}\right.
$$
   with
   $\mbox{\b{$\partial$}} B_{k}=
   \partial B_{k}\cap\lb
   \Rb^{N-1}\times\{ 
   -(Q_{k})_{N}/\beps_{k}\}\rb
   $,
   \be\label{aij}
   \tilde{a}_{i,j,k}(x)=a_{i,j}(\beps_{k}x+Q_{k})=
   \delta_{i,j}+O(|\beps_{k}x+Q_{k}|)
   \ee
   and $\tilde{b}_{i,k}(x)=b_{i}(\beps_{k}x+Q_{k})$.
   
   We use again (\ref{limakdk}), (\ref{akdk}),
   (\ref{dvu}),
   (\ref{lbq}), and we also use 
   (\ref{aij}).
      By elliptic regularity theory,
      $\tilde{w}_{k}\to w$ in 
     $C^2_{\mbox{\tiny loc}}(\bar{B}_{\infty})$
     where $B_{\infty}=\{(x',x_{N})\in\Rb^{N}:x_{N}>-L\}$ and 
   $$
   \left\{\begin{array}{ll}
   -\Delta w=w^\tsu&\mbox{in\ 
   }B_{\infty},
   \\
   0<w\leq w(0)=1&\mbox{in\ }B_{\infty},
   \\ \frac{\partial w}{\partial x_{N}}=0&
   \mbox{on\ }\partial B_{\infty}.
   \end{array}\right.
   $$
   We deduce that $w=U$.
     Moreover, $L$ has to be zero. 
     
     Suppose 
     $P_{k}\not\in\partial\Om$ for large $k$. 
     Since $\nabla\tilde{w}_{k}(0)=0$ and
     $\frac{\partial\tilde{w}_{k}}{\partial x_{N}}=0$ on 
     $\partial B_{k}\cap\lb\Rb^{N-1}\times\{ 
                -(Q_{k})_{N}/\beps_{k}\}\rb$,
     by the mean value theorem there exists
     $r_{k}\in\Rb$, with $-(Q_{k})_{N}/\beps_{k}<r_{k}<0$ such that
     $\frac{\partial^2\tilde{w}_{k}}{\partial x_{N}^2}(0,r_{k})=0$.
     Recalling that
     $\tilde{w}_{k}\to w$ in $C^2_{\mbox{\tiny loc}}(\bar B_{\infty})$, 
     it follows that
     $\frac{\partial^2w}{\partial x_{N}^2}(0)=0$.  This is impossible
     because $w=U$ and
     $\frac{\partial^2U}{\partial x_{N}^2}(0)<0$.
     We conclude that $P_{k}\in\partial\Om$ for large $k$.
     
     Returning to (\ref{return}), 
      \be\label{ctloc}\vk\to v\mbox{\ in\ } 
   C^2_{\mbox{\tiny loc}}(\Om_{\infty})\ee
     where 
     $\Om_{\infty}=\ttt$ and 
        $$
   \left\{\begin{array}{ll}
   -\Delta v=v^\tsu&\mbox{in\ 
   }\Om_{\infty},
   \\
   0<v\leq v(0)=1&\mbox{in\ }\Om_{\infty},
    \\ \frac{\partial v}{\partial\nu}=0&
    \mbox{on\ }\partial\Om_{\infty}.
   \end{array}\right.
   $$
   So $v=U$.
     Finally, from (\ref{sndd}), 
     (\ref{ctloc}) and
     $$
     \int_\ttt |\nabla U|^2=\sndd,$$ we deduce (\ref{convum}).
\end{proof}

As in \cite{APY} and \cite{BE},
let $${\mathcal{M}}:=\{CU_{\eps,y}, C\in\Rb, \eps>0, y\in\partial\Om\}$$
and $d(u,{\mathcal{M}}):=\inf\{|\nabla (u-V)|_{2}, V\in{\mathcal{M}}\}$.
The set ${\mathcal{M}}\setminus\{0\}$ is a manifold of dimension $N+1$.
The tangent space 
$T_{C_{l},\eps_{l},y_{l}}({\mathcal{M}})$ at $C_{l}U_{\eps_{l},y_{l}}$ is 
given by
$$
T_{C_{l},\eps_{l},y_{l}}({\mathcal{M}})=\mbox{span\;}\left\{
U_{{\eps},y},C\frac\partial{\partial\eps}U_{{\eps},y},
C\frac\partial{\partial\tau_{i}}U_{{\eps},y}, 1\leq i\leq N-1
\right\}_{(C_{l},\eps_{l},y_{l})}
$$
where $T_{x}(\partial\Om)=\mbox{span}\{\tau_{1},\ldots,\tau_{N-1}\}$.

As in Lemma~\ref{instantao}, let $\uk\in H^1(\Om)$ be a 
positive minimizer for $\Psi_{\alpha_k}$ satisfying \rfk.  
For large $k$,
the infimum $d(\uk,{\mathcal{M}})$ is achieved: 
\be\label{m}
d(\uk,{\mathcal{M}})=|\nabla(\uk-C_{k}U_{\eps_{k},y_{k}})|_{2}
\mbox{\ for\ }
\ck U_{\eps_{k},y_{k}}\in{\mathcal{M}}.
\ee
Furthermore,
\be\label{ck}
\ck=1+o(1)
\ee
$y_{k}\to P_{0}$ and $\eps_{k}/\beps_{k}\to 1$ (see Lemma 1 of 
\cite{BE} and Lemma 2.3 of \cite{APY}).  From (\ref{akdk}),
\be\label{akek}
\ak\ek\to 0.
\ee

We define $$\wk:=\uk-\ck U_{\eps_{k},y_{k}},$$ 
so that
\be\label{dot}
\int\nabla
U_{\eps_{k},y_{k}}\cdot\nabla w_{k}=0.
\ee
On the one hand, from (\ref{convum}), 
$$
\lim_{k\to\infty}|\nabla(\uk-\ck U_{\eps_{k},y_{k}})|_{2}=0.
$$
On the other hand, from 
Poincar\'{e}'s inequality, and the fact that both the average of
$\uk$ and the average of $\ck U_{\eps_{k},y_{k}}$, 
in $\Om$, converge to zero,
$$
\lim_{k\to\infty}|\uk-\ck U_{\eps_{k},y_{k}}|_{\ts}=0.
$$
Together,
\be\label{zero}
\lim_{k\to\infty}||w_{k}||=0.
\ee

Our next aim is the lower bound for
$|\nabla\wk|_{2}^2+c\ak
\int U_{\eps_{k},y_{k}}^{qt\!-2}\, w_{k}^2$
in Lem\-ma~\ref{dif}
where $c$ is a constant.
To obtain that lower bound we consider two eigenvalue problems.
The first one can be regarded as the limit of the second, in a sense 
made precise below.
\begin{lem}\label{ber}
    (Bianchi and Egnell~\cite{BE}, Rey~\cite{R}) The 
eigenvalue problem 
\be\left\{
\begin{array}{ll}
    -\Delta \phi=\mu U^\tsd\phi&\mbox{in\ }\ttt,\\
    \frac{\partial\phi}{\partial\nu}=0&\mbox{on\ }\partial\ttt,\\
\int_{\ttt}U^\tsd\phi^2<\infty&
\end{array}\right.\label{bie}
\ee
admits a discrete spectrum $\mu_{1}<\mu_{2}
\leq\mu_{3}\leq\ldots$ such 
that $\mu_{1}=1$, $\mu_{2}=\mu_{3}=\ldots=\mu_{N}=\tsu$ and 
$\mu_{N+1}>\tsu$. The eigenspaces $V_{1}$ and $V_{(\tsu)}$, 
corresponding to 1 and $(\tsu)$, are given by
\bas
 V_{1}&=&\mbox{span\ }U,\\
 V_{(\tsu)}&=&\textstyle\mbox{span}\left\{\left.
 \frac{\partial U_{1,y}}{\partial y_{i}}
 \right|_{y=0},\mbox{\ for\ }1\leq i\leq N-1
 \right\}.
\eas
\end{lem}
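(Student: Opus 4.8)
The plan is to deduce the statement from the classical spectral analysis of the same eigenvalue problem on all of $\Rb^{N}$, by an even-reflection argument followed by a parity count. Since $U$ is radial, the weight $U^\tsd$ is invariant under the reflection $(x',x_{N})\mapsto(x',-x_{N})$, so the $x_{N}$-even extension $\bar\phi$ of a solution $\phi$ of (\ref{bie}) is --- thanks to the Neumann condition, which kills the only possible distributional Laplacian along $\{x_{N}=0\}$ --- a weak, and hence (by elliptic regularity with the smooth positive weight $U^\tsd$) smooth, solution of $-\Delta\bar\phi=\mu U^\tsd\bar\phi$ on $\Rb^{N}$ lying in $D^{1,2}(\Rb^{N})$ with $\int_{\Rb^{N}}U^\tsd\bar\phi^{2}<\infty$; conversely, the restriction to $\ttt$ of any $x_{N}$-even solution on $\Rb^{N}$ automatically obeys the Neumann condition and solves (\ref{bie}). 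As even reflection merely doubles both the Dirichlet integral and the weighted $L^{2}$ integral, this identifies the spectrum of (\ref{bie}), eigenspaces included, with the spectrum of the whole-space problem restricted to the subspace of functions even in $x_{N}$.

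The one substantial ingredient is then the whole-space spectral theorem of Bianchi and Egnell~\cite{BE} (see also Rey~\cite{R}): $-\Delta\phi=\mu U^\tsd\phi$ on $\Rb^{N}$ with $\int_{\Rb^{N}}U^\tsd\phi^{2}<\infty$ has purely discrete spectrum; its lowest eigenvalue is $1$, simple, with eigenfunction $U$ (immediate from (\ref{inst})); its second eigenvalue is $\tsu$, of multiplicity $N+1$, with eigenspace spanned by the $N$ translations $\partial_{i}U$ and the dilation $D:=\frac{N-2}{2}U+x\cdot\nabla U$ (these being $\tsu$-eigenfunctions by differentiating $-\Delta U_{\eps,y}=U_{\eps,y}^{\tsu}$ in $\eps$ and in $y$ and using $(\tsu)(N-2)=N+2$); and every further eigenvalue strictly exceeds $\tsu$. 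I would take this gap statement as a black box: in \cite{BE,R} it follows from expanding an eigenfunction in spherical harmonics, which decouples the whole-space problem into explicitly solvable radial Sturm--Liouville problems, with the degree-$0$ harmonics producing the eigenvalues $1$ and $\tsu$, the degree-$1$ harmonics the $N$-fold eigenvalue $\tsu$, and all remaining modes eigenvalues strictly above $\tsu$.

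It remains to carry out the parity bookkeeping. The ground state $U$ is even in $x_{N}$, so $\mu_{1}=1$ survives, with $V_{1}=\mbox{span}\,U$. Among the $\tsu$-eigenfunctions, $D$ is radial and each $\partial_{i}U$ with $1\leq i\leq N-1$ is a tangential derivative of a radial function, so all of these are even in $x_{N}$, whereas $\partial_{N}U$ is odd in $x_{N}$ and is dropped; hence $\tsu$ is again an eigenvalue of (\ref{bie}), with eigenspace $V_{(\tsu)}=\mbox{span}\{\left.\frac{\partial U_{1,y}}{\partial y_{i}}\right|_{y=0}:1\leq i\leq N-1\}$ together with the dilation mode $D=-\frac{\partial}{\partial\eps}U_{\eps,0}|_{\eps=1}$. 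Since the whole-space problem has no eigenvalue strictly between $\tsu$ and the one just above it, the next eigenvalue of (\ref{bie}) is $>\tsu$; ordering the surviving eigenvalues nondecreasingly yields the stated spectrum. The only genuinely nonroutine input is the whole-space gap of \cite{BE,R}; the reflection correspondence and the parity count are elementary, the single point needing care being that the $C^{1}$-matching of the reflected function across $\{x_{N}=0\}$ is exactly what the Neumann condition supplies.
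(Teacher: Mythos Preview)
The paper does not prove this lemma; it is stated with attribution to \cite{BE} and \cite{R} and then used. Your reflection argument is the standard and correct way to deduce the half-space Neumann spectrum from the whole-space result of Bianchi--Egnell and Rey, and your parity bookkeeping is carried out correctly.

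In fact your count is \emph{more} accurate than the statement as written. The dilation mode $D=\frac{N-2}{2}U+x\cdot\nabla U=-\left.\frac{\partial}{\partial\eps}U_{\eps,0}\right|_{\eps=1}$ is radial, hence even in $x_{N}$, satisfies $-\Delta D=(\tsu)U^{\tsd}D$ with $\int_{\ttt}U^{\tsd}D^{2}<\infty$, and is linearly independent of the tangential translations $\partial_{i}U$, $1\le i\le N-1$. So the eigenvalue $\tsu$ has multiplicity $N$ on $\ttt$, not $N-1$: one should have $\mu_{2}=\cdots=\mu_{N+1}=\tsu$ and $\mu_{N+2}>\tsu$, with $V_{(\tsu)}$ spanned by $D$ together with the $N-1$ tangential derivatives. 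The paper's stated $V_{(\tsu)}$ omits the dilation direction. This is harmless for the application in Lemma~\ref{dif}, since orthogonality there is imposed against the full tangent space $T_{1,\eps,y_{\eps}}(\mathcal{M})$, which already contains $\frac{\partial}{\partial\eps}U_{\eps,y}$; but strictly speaking you have proved the corrected version of the lemma, not the one written.
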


Now we let $\eps>0$, $\nu_{\eps}>0$, and $y_{\eps}\in\partial\Om$ with 
$\lim_{\eps\to 0}y_{\eps}=y_{0}$.  Let $\{\phi_{i,\eps}\}_{i=1}^\infty$
be a complete set of orthogonal eigenfunctions with eigenvalues
$\mu_{1,\eps}<\mu_{2,\eps}\leq\mu_{3,\eps}\leq\ldots$ for the weighted
eigenvalue problem
$$\left\{
\begin{array}{ll}
    -\Delta \phi+\nu_{\eps}U^{q\bt-2}_{\eps,y_{\eps}} \phi=\mu 
    U_{\eps,y_{\eps}}^\tsd\phi&\mbox{in\ }\Om,\\
    \frac{\partial\phi}{\partial\nu}=0&\mbox{on\ }\partial\Om,\\
\end{array}\right.
$$
with $\phi_{1,\eps}>0$ and
$$
\int_{\Om}U^\tsd\phi_{i,\eps}\phi_{j,\eps}=\delta_{i,j}.
$$
Let
$$
\Om_{\eps}:=(\Om-y_{\eps})/\eps.
$$
The sets $\Om_{\eps}$ converge to a half space as $\eps\to 0$.
For a function $v$ on $\Om$, we 
define $\tilde v$ on $\Om_{\eps}$ by
$$
\tilde v(x):=\eps^{\frac{N-2}2}v(\eps x+y_{\eps}).
$$

The relation between these eigenvalue problems and the one considered 
in Lemma~\ref{ber} is given in

\begin{lem}\label{ttt}
    Suppose $y_{\eps}\in\partial\Om$, 
    $\lim_{\eps\to 0}y_{\eps}=y_{0}$,
    $\lim_{\eps\to 0}\eps^{2-\as}\nu_{\eps}=0$ and
    the sets\/ $\Om_{\eps}$ converge to $\ttt$.
    Then, up to a subsequence, 
    \be\label{muimu}
    \lim_{\eps\to 0}\mu_{i,\eps}=\mu_{i}
    \ee
    and
    \be
    \lim_{\eps\to 0}\int_{\Om_{\eps}}U^\tsd(\tilde{\phi}_{i,\eps}
    -\tilde{\phi}_{i})^2=0,\label{tstdiff}
    \ee 
    for all positive integers $i$.
    The functions
    $\mu_{i}$ and $\tilde{\phi}_{i}$ satisfy
    $$\left\{
    \begin{array}{ll}
    -\Delta \tilde{\phi}_{i}=\mu_{i} U^\tsd\tilde{\phi}_{i}&\mbox{in\ 
    }\ttt,\\
    \frac{\partial\tilde{\phi}_{i}}{\partial\nu}=0&\mbox{on\ 
    }\partial\ttt,\\
    \int_{\ttt}U^\tsd\tilde{\phi}_{i}^2=1,&
    \end{array}\right.
    $$
    and the functions $\tilde{\phi}_{i}$ are supposed 
    extended to $\Rb^{N}$ by 
    reflection.
     In particular, from the previous lemma, $\mu_{1}=1$, 
     $\tilde{\phi}_{1}=CU$ for some constant $C>0$, 
     $\mu_{i}=\tsu$ for $2\leq i\leq 
     N$ and $\mu_{N+1}>\tsu$.  Also, 
     $\{\tilde{\phi}_{i}\}_{i=2}^{N}$ is in the span of
     $\left\{{\partial U_{1,y}}/{\partial y_{i}}
     \right|_{y=0},\mbox{\ for\ }1\leq i\leq N-1\}.$
\end{lem}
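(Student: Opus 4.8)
The plan is to prove the two statements \eqref{muimu} and \eqref{tstdiff} together, by rescaling the weighted eigenvalue problem on $\Om$ to the sets $\Om_{\eps}$ and showing that the perturbation term $\nu_{\eps}U^{q\bt-2}_{\eps,y_{\eps}}$ becomes negligible, so that the rescaled problems converge to \eqref{bie}. First I would note that under the rescaling $v\mapsto\tilde v$, with $\tilde U := \widetilde{U_{\eps,y_{\eps}}} = U$ on $\Om_{\eps}$, the eigenfunctions $\pt_{i,\eps}:=\widetilde{\phi_{i,\eps}}$ satisfy
\be\label{resceig}
\left\{\begin{array}{ll}
-\Delta\pt_{i,\eps}+\eps^{2-\as}\nu_{\eps}\,U^{q\bt-2}\pt_{i,\eps}
=\mu_{i,\eps}U^\tsd\pt_{i,\eps}&\mbox{in\ }\Om_{\eps},\\[1mm]
\frac{\partial\pt_{i,\eps}}{\partial\nu}=0&\mbox{on\ }\partial\Om_{\eps},\\[1mm]
\int_{\Om_{\eps}}U^\tsd\pt_{i,\eps}\pt_{j,\eps}=\delta_{i,j},
\end{array}\right.
\ee
the exponent $2-\as$ coming from the scaling identity $q\bt=\frac{2}{N-2}\as+2$ in \eqref{qba} combined with the $\eps$-powers picked up by $U^{q\bt-2}_{\eps,y_{\eps}}$ and by the change of variables in the integrals (this is the same bookkeeping used to derive \eqref{dvu}). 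The hypothesis $\eps^{2-\as}\nu_{\eps}\to0$ is exactly what kills the zeroth-order term in the limit.

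Next I would obtain the variational characterization: $\mu_{i,\eps}$ is the $i$-th min-max level of the Rayleigh quotient $Q_{\eps}(\phi)=\bigl(\int_{\Om_{\eps}}|\nabla\phi|^2+\eps^{2-\as}\nu_{\eps}\int_{\Om_{\eps}}U^{q\bt-2}\phi^2\bigr)\big/\int_{\Om_{\eps}}U^\tsd\phi^2$ over the natural Hilbert space $D^{1,2}$, and that $\mu_i$ is the same min-max level for the problem \eqref{bie} on $\ttt$. The upper bound $\limsup_{\eps\to0}\mu_{i,\eps}\le\mu_i$ follows by using as test functions suitable cutoffs of the limit eigenfunctions $\pt_j$ transplanted to $\Om_{\eps}$ (the perturbation term contributes $O(\eps^{2-\as}\nu_{\eps})\to0$ since $U^{q\bt-2}\in L^1$ near infinity for the relevant range of $q$, as $q\bt-2=\frac{2}{N-2}\as\ge0$ and $U^{q\bt-2}$ decays like $|x|^{-(N-2)(q\bt-2)}$, integrable against the localization). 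For the matching lower bound I would take the sequence $\pt_{i,\eps}$, which is bounded in $D^{1,2}(\ttt)$ (after extension by the reflection described in the statement) because $\mu_{i,\eps}$ is bounded and $\int U^\tsd\pt_{i,\eps}^2=1$; extract a weak limit $\pt_i$; pass to the limit in \eqref{resceig} using $U^\tsd\in L^{N/2}$ and the concentration-compactness / local compactness of the embedding $D^{1,2}(\ttt)\hookrightarrow L^2(U^\tsd\,dx)$ to upgrade weak to strong convergence in that weighted $L^2$ space (here one uses that $U^\tsd$ decays fast enough that no mass escapes to infinity, together with the orthonormalization to rule out vanishing); this gives both \eqref{tstdiff} and that the limit $\pt_i$ solves the $\ttt$-problem with eigenvalue $\lim\mu_{i,\eps}\le\mu_i$, forcing equality by the min-max ordering and an induction on $i$ that preserves orthonormality in the limit. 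The final sentence of the lemma is then just Lemma \ref{ber} read off on $\ttt$.

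The main obstacle I anticipate is the compactness step: establishing that the rescaled eigenfunctions $\pt_{i,\eps}$ do not lose $L^2(U^\tsd\,dx)$-mass either near the (moving) boundary $\partial\Om_{\eps}$ or at infinity, so that the normalization $\int U^\tsd\pt_{i,\eps}^2=1$ is inherited by the weak limit and the limiting system is genuinely an eigenvalue problem with the same index. Near infinity this is handled by the rapid decay of $U^\tsd$ and a uniform tail estimate (e.g.\ a Caccioppoli inequality on annuli), and near the boundary by the $C^2_{\mathrm{loc}}$-flattening of $\Om_{\eps}$ to $\ttt$ already exploited in the proof of Lemma \ref{instantao}; combining these with the non-negativity and normalization of $\phi_{1,\eps}$ to identify $\pt_1=CU$, $C>0$, completes the argument. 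The perturbation term requires only the elementary observation that $\eps^{2-\as}\nu_{\eps}\int_{\Om_{\eps}}U^{q\bt-2}\pt_{i,\eps}^2$ is $o(1)$, which follows from the hypothesis together with a uniform bound on $\int_{\Om_{\eps}}U^{q\bt-2}\pt_{i,\eps}^2$ obtained by interpolating between $\int U^\tsd\pt_{i,\eps}^2=1$ and the $D^{1,2}$ bound via Hölder and Sobolev on $\ttt$.
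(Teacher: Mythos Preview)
Your overall architecture---rescale, use the min--max characterization, bound $\mu_{i,\eps}$ from above by transplanted limit eigenfunctions, extract a limit of $\pt_{i,\eps}$, and identify it inductively---is the same as the paper's. But two of your technical claims do not go through as stated, and the second one is precisely the difficulty the paper isolates in Lemma~\ref{tn} and Remark~\ref{td}.

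First, your interpolation bound on $\int_{\Om_{\eps}}U^{q\bt-2}\pt_{i,\eps}^2$ fails for small $\as$. When $\as=0$ (i.e.\ $q=\tf$) one has $q\bt-2=0$, so the integral is simply $\int_{\Om_{\eps}}\pt_{i,\eps}^2$, and there is no way to bound this uniformly from the $D^{1,2}$ norm and the normalization $\int U^\tsd\pt_{i,\eps}^2=1$ alone: the expanding domain $\Om_{\eps}$ carries constant functions of arbitrarily large $L^2$ norm with zero gradient and bounded weighted $L^2$ norm. What one does get for free from the Rayleigh quotient is that $\eps^{2-\as}\nu_{\eps}\int_{\Om_{\eps}}U^{q\bt-2}\pt_{i,\eps}^2\le\mu_{i,\eps}$ is bounded; this suffices to pass to the limit in the weak form of \eqref{resceig} against compactly supported test functions, but it does \emph{not} by itself show the perturbation term is $o(1)$, nor does it rule out $\lim\mu_{i,\eps}=0$.

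Second, and more seriously, your compactness step assumes the $\pt_{i,\eps}$ sit uniformly in $D^{1,2}(\ttt)$ after extension, but they need not. The reflection mentioned in the statement applies to the \emph{limit} eigenfunctions $\pt_i$ on $\ttt$, not to the $\pt_{i,\eps}$ on the bounded sets $\Om_{\eps}$. Since the original $\phi_{i,\eps}$ live in $H^1(\Om)$ with Neumann conditions, they may carry a nontrivial average $\bar\phi_{i,\eps}$; after rescaling this appears as a constant $\eps^{(N-2)/2}\bar\phi_{i,\eps}$ on $\Om_{\eps}$, which has zero gradient but is not in $D^{1,2}(\ttt)$ and can in principle account for all of the weighted $L^2$ mass. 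The paper's proof deals with exactly this by splitting $\phi_{i,\eps}=\bar\phi_{i,\eps}+\eta_{i,\eps}$ via Poincar\'e on $\Om$, and then using Lemma~\ref{tn} and Remark~\ref{td} to show $\bar\phi_{i,\eps}\,\eps^{\as/2}\to0$; this both prevents the eigenvalue from collapsing to $0$ and is what actually yields \eqref{tstdiff}. Your plan should incorporate this decomposition (or an equivalent device to control the constant mode) rather than appealing to a $D^{1,2}$ compact embedding that the sequence is not known to lie in.
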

We postpone the proof, since it requires the following
lemma and remark.
\begin{lem}\label{tn} Suppose $y_{\eps}\in\bar\Om$,
     $\phi_{\eps}\in H^1(\Om)$,
     $$
     \int U^{q\bt-2}_{\eps,y_{\eps}}\phi_{\eps}^2\to 0
     \qquad\mbox{and}\qquad
     \int|\nabla\phi_{\eps}|^2\to 0,
     $$
    as $\eps\to 0$.
Then
$$
    \int U^\tsd_{\eps,y_{\eps}}\phi_{\eps}^2\to 0,
$$   
    as $\eps\to 0$.
\end{lem}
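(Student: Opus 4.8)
The plan is to decompose $\phi_{\eps}$ into its mean $m_{\eps}:=|\Om|^{-1}\int\phi_{\eps}$ and oscillation $\psi_{\eps}:=\phi_{\eps}-m_{\eps}$, bound $m_{\eps}$ by means of the hypothesis on $\int U^{q\bt-2}_{\eps,y_{\eps}}\phi_{\eps}^2$, and transfer the bound to $\int U^\tsd_{\eps,y_{\eps}}\phi_{\eps}^2$ using that, for concentrating instantons, the $U^\tsd_{\eps,y_{\eps}}$-mass is much smaller than the $U^{q\bt-2}_{\eps,y_{\eps}}$-mass.

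\medskip
First I would handle the oscillation. Since $\int\psi_{\eps}=0$ and $\int|\nabla\psi_{\eps}|^2=\int|\nabla\phi_{\eps}|^2\to 0$, Poincar\'e's inequality and the Sobolev embedding $H^1(\Om)\hookrightarrow L^\ts(\Om)$ yield $|\psi_{\eps}|_{2}\to 0$ and $|\psi_{\eps}|_{\ts}\to 0$. By H\"older's inequality with exponents $\frac N2$ and $\frac N{N-2}$ (note $\tsd\cdot\frac N2=\ts$) and $\int U^\ts_{\eps,y_{\eps}}\leq\int_{\Rb^N}U^\ts=S^{N/2}$,
$$\int U^\tsd_{\eps,y_{\eps}}\psi_{\eps}^2\leq\lb\int U^\ts_{\eps,y_{\eps}}\rb^{\frac 2N}|\psi_{\eps}|_{\ts}^2\leq S\,|\psi_{\eps}|_{\ts}^2\to 0 .$$
Since $0\leq q\bt-2\leq\tsd$ (by (\ref{qba}) and $\as\in[0,1]$), one has $U^{q\bt-2}_{\eps,y_{\eps}}\leq 1+U^\tsd_{\eps,y_{\eps}}$ pointwise, hence also $\int U^{q\bt-2}_{\eps,y_{\eps}}\psi_{\eps}^2\leq|\psi_{\eps}|_{2}^2+\int U^\tsd_{\eps,y_{\eps}}\psi_{\eps}^2\to 0$. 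Writing $\phi_{\eps}^2=m_{\eps}^2+2m_{\eps}\psi_{\eps}+\psi_{\eps}^2$ and applying the Cauchy--Schwarz inequality for the measure $U^{q\bt-2}_{\eps,y_{\eps}}\,dx$ (resp.\ $U^\tsd_{\eps,y_{\eps}}\,dx$) followed by Young's inequality, I would get
$$\txt\frac 12 m_{\eps}^2\int U^{q\bt-2}_{\eps,y_{\eps}}\leq\int U^{q\bt-2}_{\eps,y_{\eps}}\phi_{\eps}^2+\int U^{q\bt-2}_{\eps,y_{\eps}}\psi_{\eps}^2\to 0$$
and
$$\int U^\tsd_{\eps,y_{\eps}}\phi_{\eps}^2\leq 2m_{\eps}^2\int U^\tsd_{\eps,y_{\eps}}+2\int U^\tsd_{\eps,y_{\eps}}\psi_{\eps}^2 .$$
Thus it suffices to prove $m_{\eps}^2\int U^\tsd_{\eps,y_{\eps}}\to 0$, and since $m_{\eps}^2\int U^{q\bt-2}_{\eps,y_{\eps}}\to 0$ it is enough to show that the ratio $\bigl(\int U^\tsd_{\eps,y_{\eps}}\bigr)\big/\bigl(\int U^{q\bt-2}_{\eps,y_{\eps}}\bigr)$ tends to $0$.

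\medskip
This is the crux, and it follows from elementary bounds on the rescaled instanton. Putting $x=y_{\eps}+\eps z$ and using $\tsd\cdot\frac{N-2}2=2$ and $(q\bt-2)\cdot\frac{N-2}2=\as$, one has $\int U^\tsd_{\eps,y_{\eps}}=\eps^{N-2}\int_{\Om_{\eps}}U^\tsd$ and $\int U^{q\bt-2}_{\eps,y_{\eps}}=\eps^{N-\as}\int_{\Om_{\eps}}U^{q\bt-2}$, where $\Om_{\eps}=(\Om-y_{\eps})/\eps$. The Talenti instanton satisfies $(1+|z|)^{-(N-2)}\leq U(z)\leq C_0(1+|z|)^{-(N-2)}$ for some $C_0=C_0(N)$. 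Since $\bar\Om$ is bounded, $\Om_{\eps}\subset B_{D/\eps}(0)$ with $D=\mathrm{diam}\,\Om$, and $\int_{B_{D/\eps}(0)}(1+|z|)^{-4}\,dz\leq C\,\eps^{-(N-4)}$ because $N\geq 5$; hence $\int U^\tsd_{\eps,y_{\eps}}\leq C\eps^{2}$. On the other hand $\Om$ contains a fixed ball $B_{2r_{0}}(x_{0})$, so $\Om_{\eps}$ contains the ball of radius $2r_{0}/\eps$ centred at $(x_{0}-y_{\eps})/\eps$, all of whose points $z$ satisfy $|z|\leq D'/\eps$ for some $D'$; there $U^{q\bt-2}(z)\geq(1+|z|)^{-2\as}\geq c\,\eps^{2\as}$, whence $\int_{\Om_{\eps}}U^{q\bt-2}\geq c\,\eps^{-(N-2\as)}$ and $\int U^{q\bt-2}_{\eps,y_{\eps}}\geq c\,\eps^{\as}$. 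Therefore
$$\frac{\int U^\tsd_{\eps,y_{\eps}}}{\int U^{q\bt-2}_{\eps,y_{\eps}}}\leq\frac{C}{c}\,\eps^{\,2-\as}\to 0 ,$$
since $\as\leq 1<2$. Combining this with the two displayed estimates above completes the proof. The delicate point is precisely this last step: one needs both $N\geq 5$ (so that $\int U^\tsd_{\eps,y_{\eps}}=O(\eps^2)$) and $\as\leq 1$ (so that $2-\as>0$) in order for the ratio to vanish.
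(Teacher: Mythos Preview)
Your proof is correct and follows the same overall strategy as the paper: decompose $\phi_\eps$ into its mean and its mean-zero oscillation, kill the oscillation via Poincar\'e and Sobolev, and control the constant part by comparing the two weights $U^{q\bt-2}_{\eps,y_\eps}$ and $U^{\tsd}_{\eps,y_\eps}$.

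The execution differs in one useful way. The paper expands $\phi_\eps^2=\bar\phi_\eps^2+2\bar\phi_\eps\eta_\eps+\eta_\eps^2$ and estimates each term separately with H\"older, first showing $\bar\phi_\eps\,\eps^{s/2}\to 0$ (via a bootstrap: bounded, then $\to 0$) and then bounding $\int U^{\tsd}_{\eps,y_\eps}\bar\phi_\eps^2\leq C\bar\phi_\eps^2\eps^2$; the cross term $\int U^{\tsd}_{\eps,y_\eps}\bar\phi_\eps\eta_\eps$ forces a case split $N=5$, $N=6$, $N\geq 7$. You instead use the crude bounds $(a+b)^2\leq 2a^2+2b^2$ and Cauchy--Schwarz/Young for the weighted measures, reducing everything to the single ratio estimate $\int U^{\tsd}_{\eps,y_\eps}\big/\int U^{q\bt-2}_{\eps,y_\eps}\leq C\eps^{2-s}\to 0$. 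This is equivalent in content (since $\int U^{q\bt-2}_{\eps,y_\eps}\asymp\eps^{s}$ your condition $m_\eps^2\int U^{q\bt-2}_{\eps,y_\eps}\to 0$ is exactly $\bar\phi_\eps\eps^{s/2}\to 0$), but your packaging is cleaner and avoids the dimension-dependent case analysis. The paper's finer H\"older estimates, on the other hand, yield the explicit intermediate bounds (\ref{iuzdi})--(\ref{iuzd}) that are reused later in Section~\ref{four}.
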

\begin{proof}
    We denote the average of
    $\pk$ in $\Om$ by $\pb$.  By Poincar\'{e}'s inequality,
    $$|\pk-\pb|_{\ts}\to 0.
    $$
    The limits in this proof are taken as $\eps$ approaches zero.
    So we can write $\pk=\pb+\et$, with $\et\to 0$ in $L^\ts$.
    We know that 
    $$\int\uqz(\pbq+2\pb\et+\etq)=o(1)$$ and
    we estimate the three terms on the left hand side.
    There exists a $b>0$ such that
    $$
    \int\uqz\pbq\geq b\pbq\eps^\as.
    $$
    Also,
    \bas
    \left|\int\uqz\et\pb\right|&\leq&|\et|_{\ts}|\pb|\left(\int 
    U_{\eps,y_{\eps}}^{(q
    \bt-2)\frac{2N}{N+2}}\right)^\frac {N+2}{2N}\\
    &\leq& C|\et|_{\ts}|\pb|\eps^\as.
    \eas
    If $\tf\leq q<\tz$, then 
    \ba
    \int\uqz\etq&\leq&|\et|_{\ts}^2\left(\int U_{\eps,y_{\eps}}^{
    (q\bt-2)\frac 
    {N}{2}}\right)^\frac 2N\nonumber\\
    &\leq& C|\eta_{\eps}|_{\ts}^2\eps^\as.\label{iuzdi}
    \ea
    If $q=\tz$, then
    \ba
    \int\uqz\etq&\leq&|\et|_{\ts}^2\left(\int U_{\eps,y_{\eps}}^{\frac 
    {N}{N-2}}\right)^\frac 2N\nonumber\\
    &\leq& C|\et|_{\ts}^2\eps|\log\eps|^\frac 2N.\label{iuzd}
    \ea
    Thus,
    $$
    b\pbq\eps^\as\leq C|\pb|\eps^\as+o(1).$$
       This shows that $\pb\eps^\frac{\as}{2}$ is bounded.  
       But if $\pb\eps^\frac{\as}{2}$ is 
    bounded this shows that 
    \be\label{pe}\pb\eps^\frac{\as}{2}\to 0.\ee 
    
    We want to prove that 
    $$\int\uss(\pbq+2\pb\et+\etq)=o(1).$$  For the first term on the 
    left hand side we have, by 
    (\ref{pe}),
    $$
    \int\uss\pbq\leq C\pbq\eps^2\to 0.
    $$  
    For the third term we have
    $$
    \int\uss\etq\leq C|\et|_{\ts}^2\to 0.
    $$
       We claim that the remaining term also converges to zero.  This 
    will prove the lemma.  For the second term we have the estimate
    $$
    \zeta_{\eps}:=
    \left|\int\uss\pb\et\right|\leq|\et|_{\ts}|\pb|\left(\int
    U_{\eps,y_{\eps}}^{\frac 
    N{N-2}\frac{8}{N+2}}\right)^{\frac{N+2}{2N}}.
    $$
    If $N=5$, then 
    $$
    \zeta_{\eps}\leq C|\et|_{\ts}|\pb|\eps^{N\left(1-{\frac 
    {4}{N+2}}\right){\frac{N+2}{2N}}}\leq 
    C|\et|_{\ts}|\pb|\eps^{\frac{N-2}{2}}=
    C|\et|_{\ts}|\pb|\eps^{\frac 32}.
    $$
    If $N=6$, then 
    $$
    \zeta_{\eps}\leq C|\et|_{\ts}|\pb|\eps^2|\log\eps|^\frac{2}{3}.
    $$
    Finally, if $N\geq 7$, then 
    $$\zeta_{\eps}\leq C|\et|_{\ts}|\pb|\eps^2.$$ 
    In all three cases, (\ref{pe}) implies that $\zeta_{\eps}\to 0$.
    \end{proof}
\begin{rem}\label{td}
    If in the previous lemma, instead of assuming
    $\int|\nabla\phi_{\eps}|^2\to 0$, we assume that
    $\int|\nabla\phi_{\eps}|^2$ is bounded, then we can still conclude
    $\bar{\phi}_{\eps}{\eps^\frac{\as}{2}}\to 0$,
    $\int U_{\eps,y_{\eps}}^{\ts-2}\pbq\to 0$
    and
    $\int U_{\eps,y_{\eps}}^{\ts-2}\pb\eta_{\eps}
    \to 0$,  
    as $\eps\to 0$.
\end{rem}

{\noindent \bf Proof of Lemma~\ref{ttt}}  We 
basically adapt the argument of
the proof of Lem\-ma~3.3 of [APY]
(and Lemma 5.8 of Z.Q.\ Wang in
\cite{W1}), modified 
according to 
Lemma~\ref{tn}
and
Remark~\ref{td}.  The value of $k_{0}$ 
in Lemma~3.3 of [APY] 
is equal to $N$.

The proof is by induction.  We first consider $i=1$.
By the Rayleigh quotient, $\mu_{1,\eps}$ is given by
\bas
\mu_{1,\eps}&=&\inf\left\{\left.
|\nabla u|_{2}^2+\nu_{\eps}\int
U_{\eps,y_{\eps}}^{q\bt-2} u^2
    \right|
    \int U_{\eps,y_{\eps}}^\tsd u^2=1
\right\}\\
&=&\inf\left\{\left.\int_{\Om_{\eps}}
|\nabla v|^2+\eps^{2-\as}\nu_{\eps}\int_{\Om_{\eps}}
U^{q\bt-2} v^2
    \right|
    \int_{\Om_{\eps}} U^\tsd v^2=1
\right\}.
\eas
To estimate $\mu_{1,\eps}$ from above, we choose
$v_{\eps}:\Om_{\eps}\to\Rb$ defined by
$$
\txt
v_{\eps}
=U\left/\lb\int_{\Om_{\eps}}U^\ts\rb^{\frac{1}{2}}\right..$$
From the assumption $\eps^{2-\as}\nu_{\eps}\to 0$ as $\eps\to 0$, we get
$$
\mu_{1,\eps}\leq
\int_{\Om_{\eps}}
|\nabla v_{\eps}|^2+\eps^{2-s}\nu_{\eps}\int_{\Om_{\eps}}
U^{q\bt-2} v_{\eps}^2\to\frac
{\int_{\ttt}|\nabla U|^2}{\int_{\ttt}U^\ts}=\mu_{1}=1,
$$
as $\eps\to 0$.  Hence $\limsup_{\eps\to 0}\mu_{1,\eps}\leq\mu_{1}$.
Up to a subsequence, which 
we still denote by $\eps$,
$$\lim_{\eps\to 0}\mu_{1,\eps}=\hat\mu_{1}\leq\mu_{1}.$$

The functions $\phi_{1,\eps}$ satisfy
$$
\mu_{1,\eps}=|\nabla \phi_{1,\eps}|_{2}^2+\nu_{\eps}\int
U_{\eps,y_{\eps}}^{q\bt-2} \phi_{1,\eps}^2,
$$
so
$|\nabla \phi_{1,\eps}|_{2}$ is bounded and
$\int U_{\eps,y_{\eps}}^{q\bt-2} \phi_{1,\eps}^2\to 0$,
as $\eps\to 0$.  
If $\hat\mu_{1}$ were equal to 0, then $|\nabla \phi_{1,\eps}|_{2}\to 0$ 
and $\int U_{\eps,y_{\eps}}^{q\bt-2} \phi_{1,\eps}^2\to 0$,
as $\eps\to 0$. Lemma~\ref{tn} would imply
$1=\int U_{\eps,y_{\eps}}^{\ts-2} \phi_{1,\eps}^2\to 0$, 
as $\eps\to 0$, a 
contradiction.  So $\hat\mu_{1}\neq 0$ and 
$|\nabla \phi_{1,\eps}|_{2}\not\to 0$ as $\eps\to 0$.

The functions $\tilde{\phi}_{1,\eps}$ satisfy
    $$\left\{
    \begin{array}{ll}
    -\Delta \tilde{\phi}_{1,\eps}
    +\eps^{2-\as}\nu_{\eps}U^{q\bt-2}\tilde{\phi}_{1,\eps}
    =\mu_{1,\eps} U^\tsd\tilde{\phi}_{1,\eps}&\mbox{in\ 
    }\Om_{\eps},\\
    \tilde{\phi}_{1,\eps}>0&\mbox{in\ 
    }\Om_{\eps},\\
    \frac{\partial\tilde{\phi}_{1,\eps}}{\partial\nu}=0&\mbox{on\ 
    }\partial\Om_{\eps},\\
    \int_{\ttt}U^\tsd\tilde{\phi}_{1,\eps}^2=1.&
    \end{array}\right.
    $$
By the hypothesis $\lim_{\eps\to 0}\eps^{2-\as}\nu_{\eps}\to 0$,
and
elliptic regularity theory~\cite{ADN},
$\tilde{\phi}_{1,\eps}\to\hat{\phi}_{1}$ in 
$C^2_{\mbox{\tiny loc}}\lb\ttt\rb$, as $\eps\to 0$,
where $\hat\phi_{1}$ satisfies (\ref{bie}) with $\mu=\hat\mu_{1}$.
We conclude that $\hat\mu_{1}=\mu_{1}$ and 
$\hat\phi_{1}=\tilde{\phi}_{1}$.

We will now prove (\ref{tstdiff}) in case $i=1$, i.e.\
$$
    \int_{\Om_{\eps}}U^\tsd(\tilde{\phi}_{1,\eps}
    -\tilde{\phi}_{1})^2=
    \int U_{\eps,y_{\eps}}^\tsd({\phi}_{1,\eps}
    -{\fhi}_{1,\eps})^2\to
    0,\mbox{\ as\ }\eps\to 0,
$$
where
$\fhi_{1,\eps}(\ \cdot\ )=\eps^{-\frac{N-2}{2}}\tilde{\phi}_{1}\lb
\frac{\ \cdot\ -y_{\eps}}{\eps}\rb$.  The function $\tilde{\phi}_{1}$
belongs to $L^\ts(\Rb^{N})$ and $\fhi_{1,\eps}\weak 0$ in $H^1(\Om)$.
We denote the averages of $\phi_{1,\eps}$ 
and $\fhi_{1,\eps}$, in $\Om$, by
$\bar\phi_{1,\eps}$ and 
$\bar\fhi_{1,\eps}$, respectively.  By Poincar\'{e}'s inequality,
we can write
$\phi_{1,\eps}=\bar\phi_{1,\eps}+\eta_{1,\eps}$ and
$\fhi_{1,\eps}=\bar\fhi_{1,\eps}+\zeta_{1,\eps}$ 
with $|\eta_{1,\eps}|_{\ts}$ and $|\zeta_{1,\eps}|_{\ts}$ uniformly
bounded, as $\eps\to 0$.
Moreover,
\bas
\int U_{\eps,y_{\eps}}^\tsd({\phi}_{1,\eps}-{\fhi}_{1,\eps})^2&=&
\int U_{\eps,y_{\eps}}^\tsd({\bar\phi}_{1,\eps}-
{\bar\fhi}_{1,\eps})^2\\
&&+\ 2\int U_{\eps,y_{\eps}}^\tsd({\bar\phi}_{1,\eps}-
{\bar\fhi}_{1,\eps})({\eta}_{1,\eps}-{\zeta}_{1,\eps})
\\
&&+\ \int U_{\eps,y_{\eps}}^\tsd({\eta}_{1,\eps}-{\zeta}_{1,\eps})^2.
\eas
The first two terms on the right hand side converge to 0 as $\eps
\to 0$, due to Remark~\ref{td}.  But
\bas
\tilde{\eta}_{1,\eps}&=&\tilde{\phi}_{1,\eps}-
                         \tilde{\bar{\phi}}_{1,\eps}\\
                         &=&\tilde{\phi}_{1,\eps}-
                         \eps^{\frac{N-2}{2}}{\bar{\phi}}_{1,\eps}\\
                         &=&\tilde{\phi}_{1,\eps}-
                         \eps^{\frac{N-2-\as}{2}}
                         \lb\eps^{\frac{\as}{2}}
                         {\bar{\phi}}_{1,\eps}\rb\\
\noalign{\noindent\mbox{and}}
\tilde{\zeta}_{1,\eps}&=&\tilde{\phi}_{1}-
                         \eps^{\frac{N-2}{2}}
                         {\bar{\fhi}}_{1,\eps}.
\eas
These equalities and, again,
Remark~\ref{td} show that $\tilde{\eta}_{1,\eps}\to
\tilde{\zeta}_{1,\eps}$ in $C^2_{\mbox{\tiny loc}}\lb\ttt\rb$,
as $\eps\to 0$. We
conclude that the term
$$
\int_{\Om_{\eps}}
U^\tsd\lb{\tilde\eta}_{1,\eps}-{\tilde\zeta}_{1,\eps}\rb^2
$$
also converges to 0  as $\eps\to 0$
(see (3.42) and (3.43) in the proof of Lemma~3.3 of [APY]).
This proves (\ref{tstdiff}) for $i=1$.

Now assume that (\ref{muimu}) and
(\ref{tstdiff}) hold for $1\leq i\leq L-1$,

To estimate $\mu_{L,\eps}$ from above we choose
$v_{\eps}:\Om_{\eps}\to\Rb$ defined by
$v_{\eps}=\tilde\phi_{L}$.  Let 
$v_{\eps}=\sum_{i=1}^\infty a_{i,\eps}\tilde{\phi}_{i,\eps}$.
Clearly,
\be\label{aieu}
\sum_{i=1}^\infty \mu_{i,\eps}a_{i,\eps}^2=
\int_{\Om_{\eps}}
|\nabla v_{\eps}|^2+\eps^{2-\as}\nu_{\eps}\int_{\Om_{\eps}}
U^{q\bt-2} v_{\eps}^2\to\mu_{L}
\ee
and
\be\label{aied}
\sum_{i=1}^{L-1} \mu_{i,\eps}a_{i,\eps}^2+
\mu_{L,\eps}\sum_{i=L}^\infty a_{i,\eps}^2\leq
\sum_{i=1}^\infty \mu_{i,\eps}a_{i,\eps}^2.
\ee

We claim that $a_{i,\eps}\to 0$ for $1\leq i\leq L-1$
and $\sum_{i=L}^\infty a_{i,\eps}^2\to 1$, as $\eps\to 0$.
Indeed, 
\bas
a_{i,\eps}&=&\int_{\Om_{\eps}}U^\tsd v_{\eps}\tilde{\phi}_{i,\eps}\\
&=&
\int_{\Om_{\eps}}U^\tsd v_{\eps}\tilde{\phi}_{i}+
\int_{\Om_{\eps}}U^\tsd v_{\eps}(
\tilde{\phi}_{i,\eps}-\tilde{\phi}_{i}).
\eas
As $\eps\to 0$,
for $1\leq i\leq L-1$, 
the first term on the right hand side approaches
$\int_{\ttt}U^\tsd\tilde{\phi}_{L}\tilde{\phi}_{i}=0$ 
and the second one
is bounded by
$$
\lb\int_{\Om_{\eps}}U^\tsd v_{\eps}^2\rb^\frac{1}{2}
\lb\int_{\Om_{\eps}}U^\tsd (
\tilde{\phi}_{i,\eps}-\tilde{\phi}_{i})^2\rb^\frac{1}{2}\to 0.
$$
Moreover,
$$
\sum_{i=1}^\infty a_{i,\eps}^2=\int_{\Om_{\eps}}U^\tsd v_{\eps}^2
\to\int_{\ttt}U^\tsd \tilde{\phi}_{L}^2=1.
$$
This proves our claim.

Combining (\ref{aieu}), (\ref{aied}) and the previous claim,
we have $\limsup_{\eps\to 0} \mu_{L,\eps}\leq\mu_{L}$.
Up to a subsequence, which we still denote by $\eps$,
$$
\lim_{\eps\to 0}\mu_{L,\eps}=\hat\mu_{L}\leq\mu_{L}.
$$

The value of $\mu_{L,\eps}$ is
\bas
\mu_{L,\eps}&=&\inf\left\{\left.\int_{\Om_{\eps}}
|\nabla v|^2+\eps^{2-\as}\nu_{\eps}\int_{\Om_{\eps}}
U^{q\bt-2} v^2
    \right|
    \int_{\Om_{\eps}} U^\tsd v^2=1,\right.\\ 
   &&\qquad \left.\int_{\Om_{\eps}}U^\tsd\tilde{\phi}_{i,\eps}v=0
    \mbox{\ for\ }1\leq i\leq L-1
\right\}.
\eas
We repeat part of the argument given for $i=1$ and conclude
that $\tilde\phi_{L,\eps}\to\hat\phi_{L}$ in $C^2_{\mbox{\tiny loc}}
\lb\ttt\rb$, as $\eps\to 0$, where $\hat\phi_{L}$
satisfies (\ref{bie}) with $\mu=\hat\mu_{L}$.
The space  
consisting
in the completion with norm 
$|\nabla\ \cdot\ |_{L^2(\Rb^{N})}$ of the  smooth functions
with compact support in $\Rb^{N}$ is dense in $L^2(U^{\ts-2}dx)$.
Therefore 
the Gagliardo-Nirenberg-Sobolev inequality implies that 
$\hat\phi_{L}\in L^\ts(\Rb^{N})$.
So we can also conclude
\be
\lim_{\eps\to 0}\int_{\Om_{\eps}}U^\tsd(\tilde\phi_{L,\eps}
-\hat\phi_{L})^2=0.\label{chapeu}
\ee

To prove that $\hat\phi_{L}=\tilde\phi_{L}$ and 
$\hat\mu_{L}=\mu_{L}$, we show that
$\hat\phi_{L}$ is orthogonal to $\tilde\phi_{i}$ for
$1\leq i\leq L-1$. So let $1\leq i\leq L-1$.  Then
$$
0=\int U_{\eps,y_{\eps}}^\tsd\phi_{i,\eps}\phi_{L,\eps}=
\int_{\Om_{\eps}} U^\tsd\tilde\phi_{i,\eps}\tilde\phi_{L,\eps}
\to
\int_{\ttt} U^\tsd\tilde\phi_{i}\hat\phi_{L},
$$
as $\eps\to 0$, as the difference
$
\int_{\Om_{\eps}} U^\tsd\tilde\phi_{i,\eps}\tilde\phi_{L,\eps}
-
\int_{\ttt} U^\tsd\tilde\phi_{i}\hat\phi_{L},
$
approaches zero, as $\eps\to 0$, because of (\ref{tstdiff})
for $i=i$ and (\ref{chapeu}).  Indeed,
\bas
\int_{\Om_{\eps}} U^\tsd\tilde\phi_{i,\eps}\tilde\phi_{L,\eps}
-
\int_{\ttt} U^\tsd\tilde\phi_{i}\hat\phi_{L}&=&
\int_{\Om_{\eps}} U^\tsd\tilde\phi_{i}\hat\phi_{L}
-
\int_{\ttt} U^\tsd\tilde\phi_{i}\hat\phi_{L}
\\
&&+\
\int_{\Om_{\eps}} U^\tsd
(\tilde\phi_{i,\eps}-\tilde{\phi}_{i})\hat\phi_{L}
\\
&&+\ 
\int_{\Om_{\eps}} U^\tsd
\tilde\phi_{i,\eps}(\tilde\phi_{L,\eps}-\hat\phi_{L}).
\eas
\hfill$\Box$

Using Lemma~\ref{ttt} and the proof of Lemma 3.4 of [APY],
we deduce
\begin{lem}\label{dif}
    Suppose $y_{\eps}\in\partial\Om$, $\lim_{\eps\to 0}y_{\eps}=y_{0}$
    and
    $\lim_{\eps\to 0}\eps^{2-\as}\nu_{\eps}=0$.
    There 
    exists a constant $\gamma_{1}>0$ such that, for sufficiently 
    small $\eps$,
    $$|\nabla w|_{2}^2+\nu_{\eps}
    \int U_{\eps,y_{\eps}}^{q\bt-2} w^2
    \geq(\tsu+\gamma_{1})
    \int U_{\eps,y_{\eps}}^\tsd w^2+O(\eps^2||w||^2)
    $$
    for $w$ orthogonal to $T_{1,\eps,y_{\eps}}({\mathcal{M}})$.
\end{lem}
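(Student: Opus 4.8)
The plan is to rescale at $y_\eps$ to the expanding domains $\Om_\eps:=(\Om-y_\eps)/\eps$, which converge to $\ttt$, and then run the eigenfunction-expansion argument from the proof of Lemma~3.4 of \cite{APY}, now fed by Lemma~\ref{ttt}, Lemma~\ref{tn} and Remark~\ref{td}. Let $w$ be orthogonal to $T_{1,\eps,y_\eps}({\mathcal{M}})$ for the inner product $(u,v)\mapsto\int_\Om\nabla u\cdot\nabla v$, and set $\tilde w(x):=\eps^{\frac{N-2}{2}}w(\eps x+y_\eps)$. Since $q\bt-2=\frac{2}{N-2}\as$ by (\ref{qba}), the change of variables $x\mapsto\eps x+y_\eps$ gives
\be
|\nabla w|_{2}^2+\nu_\eps\int U_{\eps,y_\eps}^{q\bt-2}w^2
=\int_{\Om_\eps}|\nabla\tilde w|^2+\tilde\nu_\eps\int_{\Om_\eps}U^{q\bt-2}\tilde w^2=:Q_\eps(\tilde w),
\ee
with $\tilde\nu_\eps:=\eps^{2-\as}\nu_\eps\to 0$, together with $\int U_{\eps,y_\eps}^\tsd w^2=\int_{\Om_\eps}U^\tsd\tilde w^2=:R_\eps(\tilde w)$ and $\int_{\Om_\eps}|\nabla\tilde w|^2=|\nabla w|_2^2\le\|w\|^2$. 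So it suffices to prove $Q_\eps(\tilde w)\ge(\tsu+\gamma_1)R_\eps(\tilde w)+O(\eps^2\|w\|^2)$.

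Let $\{\tilde\phi_{i,\eps}\}_{i\ge1}$ be the rescaled eigenfunctions of Lemma~\ref{ttt}: they are orthonormal for the bilinear form $R_\eps$ and orthogonal for $Q_\eps$, with $Q_\eps(\tilde\phi_{i,\eps})=\mu_{i,\eps}$. Writing $\tilde w=\sum_i b_{i,\eps}\tilde\phi_{i,\eps}$ we have $Q_\eps(\tilde w)=\sum_i\mu_{i,\eps}b_{i,\eps}^2$, $R_\eps(\tilde w)=\sum_i b_{i,\eps}^2$, hence
\be
Q_\eps(\tilde w)-(\tsu+\gamma_1)R_\eps(\tilde w)=\sum_i(\mu_{i,\eps}-\tsu-\gamma_1)\,b_{i,\eps}^2.
\ee
By Lemma~\ref{ttt}, $\mu_{i,\eps}\to\mu_i$, and by Lemma~\ref{ber} there is a fixed finite index set $J$ (corresponding to $\mu_1=1$ and the eigenvalues equal to $\tsu$) such that $\mu_i\le\tsu$ for $i\in J$ and $\mu_i\ge\mu_{N+1}>\tsu$ for $i\notin J$. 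Fixing $\gamma_1\in\bigl(0,\frac{1}{2}(\mu_{N+1}-\tsu)\bigr)$, for $\eps$ small every term with $i\notin J$ is $\ge0$, while $|\mu_{i,\eps}-\tsu-\gamma_1|\le C$ for $i\in J$; so it only remains to prove $\sum_{i\in J}b_{i,\eps}^2=O(\eps^2\|w\|^2)$.

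This last estimate is where the orthogonality of $w$ to $T_{1,\eps,y_\eps}({\mathcal{M}})$ is used, and it is the main obstacle. For $i\in J$, $Q_\eps$-orthogonality of the eigenbasis gives $b_{i,\eps}=\mu_{i,\eps}^{-1}Q_\eps(\tilde w,\tilde\phi_{i,\eps})$, which after unscaling equals $\mu_{i,\eps}^{-1}\bigl(\int_\Om\nabla w\cdot\nabla\phi_{i,\eps}+\nu_\eps\int_\Om U_{\eps,y_\eps}^{q\bt-2}w\,\phi_{i,\eps}\bigr)$, $\phi_{i,\eps}$ being the eigenfunction on $\Om$. By Lemma~\ref{ttt} the rescaled $\tilde\phi_{i,\eps}$ converge to a limit $\tilde\phi_i\in V_1\oplus V_{(\tsu)}$, and by Lemma~\ref{ber} the space $V_1\oplus V_{(\tsu)}$ is contained in the span of the rescaled generators of $T_{1,\eps,y_\eps}({\mathcal{M}})$ — namely $U$, the $N-1$ tangential translations, and the dilation field $-\frac{N-2}{2}U-x\cdot\nabla U$. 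Arguing as in the proof of Lemma~3.3 of \cite{APY}, and using the behavior of $U_{\eps,y_\eps}$ near the smooth boundary after straightening $\partial\Om$ as in the proof of Lemma~\ref{instantao} (the error coming from the curvature of $\partial\Om$ and the coefficient perturbation (\ref{aij}), once the odd part is killed against the nearly radial bubble, is of order $\eps^2$), one shows that each $\phi_{i,\eps}$, $i\in J$, lies within gradient-distance $O(\eps)$ of $T_{1,\eps,y_\eps}({\mathcal{M}})$. Denoting by $\Pi$ the gradient-orthogonal projection onto this tangent space and using $\int_\Om\nabla w\cdot\nabla(\Pi\phi_{i,\eps})=0$, we get $\bigl|\int_\Om\nabla w\cdot\nabla\phi_{i,\eps}\bigr|=\bigl|\int_\Om\nabla w\cdot\nabla(\phi_{i,\eps}-\Pi\phi_{i,\eps})\bigr|\le\|w\|\cdot O(\eps)$, while the zero-order term $\nu_\eps\int_\Om U_{\eps,y_\eps}^{q\bt-2}w\,\phi_{i,\eps}$ is controlled, via the Cauchy--Schwarz estimates of Lemma~\ref{tn} and Remark~\ref{td}, by a quantity of the same order. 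Hence $b_{i,\eps}^2=O(\eps^2\|w\|^2)$ for each of the finitely many $i\in J$, so $\sum_{i\in J}b_{i,\eps}^2=O(\eps^2\|w\|^2)$, and substitution into the last display completes the proof.
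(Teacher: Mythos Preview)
Your approach is exactly what the paper intends: expand in the eigenbasis of the perturbed problem, use Lemma~\ref{ttt} for the high modes, and use the gradient-orthogonality of $w$ to $T_{1,\eps,y_\eps}(\mathcal{M})$ to control the finitely many low modes, just as in the proof of Lemma~3.4 of \cite{APY}. The paper itself gives no further detail than that citation, so structurally your write-up is faithful.

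There is one soft spot. You compute $b_{i,\eps}$ via the $Q_\eps$-formula, which produces the cross term $\nu_\eps\int_\Om U_{\eps,y_\eps}^{q\bt-2}w\,\phi_{i,\eps}$, and you assert it is $O(\eps\|w\|)$ ``via the Cauchy--Schwarz estimates of Lemma~\ref{tn} and Remark~\ref{td}''. Those estimates give at best
\[
\Bigl|\nu_\eps\!\int U_{\eps,y_\eps}^{q\bt-2}w\,\phi_{i,\eps}\Bigr|
\le \nu_\eps^{1/2}\Bigl(\nu_\eps\!\int U_{\eps,y_\eps}^{q\bt-2}\phi_{i,\eps}^2\Bigr)^{1/2}
\Bigl(\int U_{\eps,y_\eps}^{q\bt-2}w^2\Bigr)^{1/2}
\le C\,\nu_\eps^{1/2}\eps^{\as/2}\|w\|,
\]
and under the sole hypothesis $\eps^{2-\as}\nu_\eps\to 0$ this is merely $o(\|w\|)$, not $O(\eps\|w\|)$; that is not enough for the claimed $O(\eps^2\|w\|^2)$ remainder. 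The clean fix is to avoid this term altogether by using the $R_\eps$-orthonormality of the eigenbasis: $b_{i,\eps}=\int_\Om U_{\eps,y_\eps}^{\tsd}w\,\phi_{i,\eps}$. Writing $\phi_{i,\eps}=\Pi\phi_{i,\eps}+(\phi_{i,\eps}-\Pi\phi_{i,\eps})$ and using, for each tangent generator $\psi$, the identity $\int_\Om U_{\eps,y_\eps}^{\tsd}\psi\,w=-\mu_\psi^{-1}\int_{\partial\Om}w\,\partial_\nu\psi$ (coming from $-\Delta\psi=\mu_\psi U_{\eps,y_\eps}^{\tsd}\psi$ and $\int_\Om\nabla\psi\cdot\nabla w=0$), the boundary estimates of \cite{APY} give $b_{i,\eps}=O(\eps\|w\|)$ with no $\nu_\eps$ entering. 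With this adjustment your argument goes through and coincides with the paper's.
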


\section{Proof of the main theorem}\label{four}

In this section we prove Theorem~\ref{theorem} and
give one more lower bound for $\alpha_{0}$,
in addition to one in Remark~\ref{lbum}.

Assume the positive functions
$$\uk=C_{k}U_{\eps_{k},y_{k}}+w_{k},$$ 
satisfy 
(\ref{sndd}),
(\ref{m}), (\ref{ck}), (\ref{akek}) and (\ref{zero}).

We start by collecting some useful estimates.
For brevity, we shall write $$U_{k}:=U_{\eps_{k},y_{k}}.$$

{\em Estimate for $\iukw$:\/}  From Lemma 4.1 of \cite{G},
\be
    \left|
    \iukw
    \right|\leq\left\{
    \begin{array}{ll}
        O\left(\eps_k^\frac 32\nwk\right)&\mbox{if\ }N=5,\\
        O\left(\eps_k^2|\log\ek|^\frac 23\nwk\right)&\mbox{if\ }N=6,\\
        O\left(\eps_k^2\nwk\right)&\mbox{if\ }N\geq 7.
    \end{array}
    \right.\label{uwi}
\ee

{\em Estimate for $\iusu$:\/} From [APY], Equations (3.15), for $N\geq 
5$,
\be\label{iusu}
\iusu=O(\ek\nwk).\ee

{\em Estimate for $\int U_{k}^{q-1}|\wk|$:\/} Since 
$(q-1)\frac{2N}{N+2}\geq\lb\frac{2N}{N-1}-1\rb\frac{2N}{N+2}=
\frac{N+1}{N-1}\frac{2N}{N+2}>\frac{N}{N-2}$ (for $N\geq 4$),
\ba\label{iuzu}
\int U_{k}^{q-1}|\wk|&\leq&\nwks\left(\int 
U_{k}^{(q-1)\frac{2N}{N+2}}\right)^\frac{N+2}{2N}\nonumber\\
&\leq&C\nwks\eps_{k}^{N\left(1-{\frac{q-1}{\ts-1}}
\right){\frac{\ts-1}{\ts}}}\nonumber\\
&=&C\nwks\eps_{k}^{\frac{N-2}{2}(\ts-q)}
\nonumber\\
&=&O(\eps_{k}^{{1}/{\bt}}\nwk).
\ea

{\em Estimate for $\int U_{k}^{q\bt-2}w_{k}^2$:\/} 
If $\tf\leq q<\tz$, then $(q\bt-2)\frac{N}{2}<\frac{N}{N-2}$.
From (\ref{iuzdi}) in the proof of Lemma~\ref{tn},
\be\label{iuqtd}
\int U_{k}^{q\bt-2}|\wk|^2\leq C|\wk|_{\ts}^2\eps_{k}^\as.
\ee
If $q=\tz$, from (\ref{iuzd}) in the proof of Lemma~\ref{tn},
\be\label{iuqtdz}
\int U_{k}^{q\bt-2}|\wk|^2\leq
C|w_{k}|_{\ts}^2\eps_{k}|\log\eps_{k}|^\frac 2N.
\ee

{\em Estimate for $\iusd$:\/} 
\be\label{iusd}
\iusd=O(\nwkq).
\ee

Now we will obtain a lower bound for $\Psi_{\ak}(\uk)$.
Let $v_{k}=\uk/\ck=
U_{k}+\tilde{w}_{k}=U_{k}+\wk/\ck$.  Because of (\ref{ck}), 
the sequence $(v_{k})$ 
satisfies (\ref{sndd}) and the sequence $\tilde{w}_{k}$ 
satisfies (\ref{zero}).
Of course, $d(v_{k},M)$ is achieved by $\Uk$.
Because $\Psi_{\alpha}$ is 
homogeneous of degree zero, $\Psi_{\ak}(\uk)=\Psi_{\ak}(v_{k})$.  
We will compute 
$\Psi_{\ak}(v_{k})$ but we will still call $v_{k}$ by $\uk$, and 
$\tilde{w}_{k}$
by $\wk$. 

The value of $\Psi_{\alpha_k}(u_{k})$ is the sum of 
$\beta(u_{k})$ and $\ak\beta(u_{k})\delta(u_{k})$. 
As in \cite{G}, we can obtain the following lower bound for 
$\beta(u_{k})$:
$$\beta(u_{k})
    \geq\frac{\ngukq}{\nuksq}+
    2^{\frac{N-2}N}S^{\frac{2-N}{2}}\left[
    \gamma_{2}\pnhwkq
        -\ (\tsu)\iusd
    \right]+o(\ek),
$$
for any fixed number $\gamma_{2}<1$. 

We also wish to obtain a lower bound for 
\be\label{prod}\alpha_{k}\beta(u_{k})\delta(u_{k})=\ak
\frac{\pnhukpqmeio^\as}{|u_{k}|_{\ts}^{2+\ts\as/2}}\uqb.
\ee

We obtain a lower bound for $\pnhukpqmeio^\as$ from 
$$
    \nhukpq=\pnhukq+2{\txt\left(\igukw+a\iukw\right)} 
    +\ \pnhwkq.
$$
Using 
(2.17) and (2.38) in Adimurthi and Mancini \cite{AM},
(\ref{dot}) and (\ref{uwi}),
$$
\nhukpq=\sndd+O(\ek)+O(\nwkq).
$$
This implies that
$$
||\uk||^\as\geq \left(\sndd\right)^\frac{\as}{2}+O(\ek)+O(\nwkq).
$$

We obtain a lower bound for $|u_{k}|_{\ts}^{-(2+\ts\as/2)}$ from 
$$
\nukpss=\iuks+\ts\iusu+{\txt\frac{\ts(\tsu)}{2}}\iusd+O(\nwkr)
$$
(see [APY]),
where $r=\min\{\ts,3\}$, i.e., $r=3$ if $N=5$, and $r=\ts$ if $N>5$.
Using  (2.18) in Adimurthi and Mancini \cite{AM},
(\ref{iusu}), (\ref{iusd}) and 
$$
(1+z)^{-\eta}\geq 1-\eta z,
$$
for $\eta>0$ and $z\geq -1$, we deduce
$$
|u_{k}|_{\ts}^{-(2+\ts\as/2)}
\geq\left(\sndd\right)^{-\frac{\as}{2}-\frac{2}{\ts}}+
O(\ek)+O(\nwkq).
$$

For the product we obtain the lower bound
\ba
\frac{\pnhukpqmeio^\as}{|u_{k}|_{\ts}^{2+\ts\as/2}}&\geq&
2^{\frac{N-2}{N}}S^{\frac{2-N}{2}}+O(\ek)+O(\nwkq)\label{fum}\\
&=&\du+\ddois+\dt.\nonumber
\ea

To estimate $|u_{k}|_{q}^{q\bt}$ we use 
\begin{lem}\label{calculus}  Suppose $\tf\leq q\leq\tz$ and
    $\bt$ is given by\/ {\rm (\ref{bt})}.
    For $x\geq-1$,
    $$
    (1+x)^q\geq 1+\frac{q\bt}{2}|x|^\frac{2}{\bt}-q|x|.
    $$
\end{lem}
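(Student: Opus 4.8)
The plan is to set $p:=2/\bt$, to note that $\tfrac{q\bt}{2}=\tfrac{q}{p}$, and to prove the equivalent statement
$$(1+x)^q\geq 1+\tfrac{q}{p}\,|x|^{p}-q\,|x|\qquad\text{for }x\geq-1 .$$
The first thing to record is the location of the exponent $p$: from $\bt\in\left[\tfrac{2}{\tf},1\right]$ we get $p=2/\bt\in[2,\tf]$, and $q\bt\geq 2$ (which follows from (\ref{qba}), since $\as\geq 0$) gives $q\geq 2/\bt=p$. Hence $q\geq p\geq 2$, so $q-1\geq p-1\geq 1$; these two monotonicity facts are all the argument needs.

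Next I would run a one--variable calculus argument, splitting at $x=0$. Put $f(x):=(1+x)^q-1-\tfrac{q}{p}|x|^{p}+q|x|$, which vanishes at $x=0$ on either branch. For $x\geq 0$ one has $f'(x)=q\bigl[(1+x)^{q-1}+1-x^{p-1}\bigr]$, and the bracket is nonnegative: if $0\leq x\leq 1$ then $x^{p-1}\leq 1$ (as $p-1\geq 0$), so the bracket is $\geq(1+x)^{q-1}\geq 0$; if $x\geq 1$ then $x^{p-1}\leq x^{q-1}\leq(1+x)^{q-1}$ (using $p\leq q$ and $x\leq 1+x$), so the bracket is $\geq 1$. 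Thus $f$ is nondecreasing on $[0,+\infty)$, whence $f\geq f(0)=0$ there. For $-1\leq x\leq 0$, write $x=-t$ with $t\in[0,1]$ and set $g(t):=f(-t)=(1-t)^q-1-\tfrac{q}{p}t^{p}+qt$; then $g(0)=0$ and $g'(t)=q\bigl[1-(1-t)^{q-1}-t^{p-1}\bigr]$. Here $(1-t)^{q-1}\leq 1-t$ because $1-t\in[0,1]$ and $q-1\geq 1$, while $t^{p-1}\leq t$ because $t\in[0,1]$ and $p-1\geq 1$; adding these, $(1-t)^{q-1}+t^{p-1}\leq 1$, so $g'\geq 0$ and $g\geq g(0)=0$ on $[0,1]$. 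Combining the two branches gives $f\geq 0$ on $[-1,+\infty)$, which is the assertion.

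I do not expect a genuine obstacle. Once the bound $2\leq p\leq q$ is in hand, the whole estimate reduces to the elementary facts $r^{a}\leq r$ for $r\in[0,1],\ a\geq 1$, and $r^{a}\leq r^{b}$ for $r\geq 1,\ a\leq b$. The only point requiring a little care is the bookkeeping that identifies $\tfrac{q\bt}{2}$ with $q/p$ and confirms $q\bt\geq 2$, so that $p=2/\bt$ is genuinely trapped between $2$ and $q$; this is immediate from (\ref{bt}) and (\ref{qba}). One should also mention the two extreme cases as a check: $q=\tf$ forces $p=q$ (the term $\tfrac{q}{p}|x|^{p}$ becomes $|x|^{q}$), while $q=\tz$ forces $p=2$ (it becomes $\tfrac{q}{2}x^{2}$), matching the two inequalities of interest.
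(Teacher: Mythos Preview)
Your proof is correct and follows essentially the same route as the paper's: both define the difference $f(x)=(1+x)^q-1-\tfrac{q\bt}{2}|x|^{2/\bt}+q|x|$, compute $f'$, and treat $x>0$ and $x<0$ separately, reducing the negative branch to the inequality $(1-t)^{q-1}+t^{\,2/\bt-1}\leq 1$ on $[0,1]$. The only cosmetic difference is that the paper justifies this last inequality by observing that both summands are convex with matching endpoint values, whereas you use the more direct bound $r^{a}\leq r$ for $r\in[0,1]$ and $a\geq 1$; your notation $p=2/\bt$ and the explicit verification $2\leq p\leq q$ make the argument a bit cleaner.
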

\begin{proof}
    From (\ref{qba})
    if follows that $\frac{2}{\bt}\leq q$, as $\as\geq 0$.
    We will consider separately the cases $x>0$ and $x<0$, since the 
    inequality is obviously true if $x=0$.
    For $x\geq-1$, $x\neq 0$, define
    $$
    f(x)=(1+x)^q-1-\frac{q\bt}{2}|x|^\frac{2}{\bt}+q|x|.
    $$
    Then
    $$
    f'(x)=q(1+x)^{q-1}-q|x|^{\frac{2}{\bt}-1}\mbox{sign\,}\/ x
    +q\,\mbox{sign\,}\/ x.
    $$
    If $x>0$, then $f'(x)>q(1+x)^{q-1}-qx^{\frac{2}{\bt}-1}>0$.
    If $-1\leq x<0$, then
    $$
    f'(x)=q(1+x)^{q-1}+qx^{\frac{2}{\bt}-1}-q\leq 0,
    $$
    as
    $$
    (1+x)^{q-1}+x^{\frac{2}{\bt}-1}\leq 1,
    $$
    since both $x\mapsto(1+x)^{q-1}$ and
    $x\mapsto x^{\frac{2}{\bt}-1}$ are convex.
\end{proof}

As a consequence of Lemma~\ref{calculus},
$$
    |\uk|_{q}^{q\bt}\geq\lb|U_{k}|_{q}^q+{\txt\frac{q\bt}{2}}
    \int U_{k}^{q-\frac{2}{\bt}}|\wk|^\frac{2}{\bt}
    -\eta_{k}\rb^\bt,
$$
with
$$
\eta_{k}:=\min\left\{q\int U_{k}^{q-1}|\wk|,\
|U_{k}|_{q}^q+{\txt\frac{q\bt}{2}}
    \int U_{k}^{q-\frac{2}{\bt}}|\wk|^\frac{2}{\bt}
\right\}
$$
We now use the fact that $(1-x)^\eta\geq 1-x$ for $0\leq x\leq 1$ and
$0<\eta\leq 1$ to write
$$
    |\uk|_{q}^{q\bt}\geq\lb|U_{k}|_{q}^q+{\txt\frac{q\bt}{2}}
    \int U_{k}^{q-\frac{2}{\bt}}|\wk|^\frac{2}{\bt}\rb^\bt
    -q|U_{k}|_{q}^{q(\bt-1)}\int U_{k}^{q-1}|\wk|.
$$
Estimates (\ref{iuzu}), (\ref{iukz}) and the H\"older inequality 
yield
\bas
    |\uk|_{q}^{q\bt}&\geq&\lb|U_{k}|_{q}^q+{\txt\frac{q\bt}{2}}
    \int U_{k}^{q-\frac{2}{\bt}}|\wk|^\frac{2}{\bt}\rb^\bt
    -C\eps_{k}^{(\bt-1)\frac{1}{\bt}}\eps_{k}^{{1}/{\bt}}||\wk||\\
    &=&\lb|U_{k}|_{q}^q+{\txt\frac{q\bt}{2}}
    \int U_{k}^{q-\frac{2}{\bt}}|\wk|^\frac{2}{\bt}\rb^\bt
    +O(\ek)||\wk||\\
    &=&{\txt\frac{1}{2^{1-\bt}}}|U_{k}|_{q}^{q\bt}+
    {\txt\frac{1}{2^{1-\bt}}}
    {\txt\frac{q^\bt\bt^\bt}{2^\bt}}\lb
    \int U_{k}^{q-\frac{2}{\bt}}|\wk|^\frac{2}{\bt}\rb^\bt
    +O(\ek)||\wk||\\
    &\geq&{\txt\frac{1}{2^{1-\bt}}}|U_{k}|_{q}^{q\bt}+
    {\txt\frac{1}{|\Om|^{1-\bt}}}
    {\txt\frac{q^\bt\bt^\bt}{2}}
    \int U_{k}^{q\bt-2}w_{k}^2
    +O(\ek)||\wk||.
\eas
Using (\ref{iukz}) again,
\ba\label{F}
\ak|\uk|_{q}^{q\bt}&\geq&{\txt\frac{B(q,N)^\bt}{2}}\ak\ek+
{\txt\frac{1}{|\Om|^{1-\bt}}}
    {\txt\frac{q^\bt\bt^\bt}{2}}\ak
    \int U_{k}^{q\bt-2}w_{k}^2
+o(\ak\ek)\\
&=&\fu+\fd+\ft.\nonumber
\ea

The next step is to substitute (\ref{fum}) and (\ref{F}) in (\ref{prod}).
We notice that 
$$(\du+\ddois+\dt)\ft=o(\ak\ek)$$ 
and
$$
(\ddois+\dt)\fu=o(\ak\ek).
$$
The term $\ddois\fd$ is also $o(\ak\ek)$.  In fact, 
if $\tf\leq q<\tz$, by (\ref{iuqtd}), 
$$
\ddois\fd=O\lb\ak\eps_{k}^{\as+1}\nwkq\rb=o(\ak\ek).
$$
If $q=\tz$, by (\ref{iuqtdz}),
$$
\ddois\fd=O\left(\ak\eps_{k}^2|\log\ek|^{\frac{2}{N}}
\nwkq\right)=o(\ak\ek).
$$
So,
\bas
\ak\beta(u_{k})\delta(u_{k})&\geq&
2^{\frac{N-2}{N}}S^{\frac{2-N}{2}}
{\txt\frac{B(q,N)^\bt}{2}}\ak\ek\\
&&+\
2^{\frac{N-2}{N}}S^{\frac{2-N}{2}}
{\txt
\frac{1}{|\Om|^{1-\bt}}\frac{q^\bt\bt^\bt}{2}
}
\ak\int U_{k}^{q\bt-2}w_{k}^2\\
&&+\ O\left(\nwkq\right)\ak\int U_{k}^{q\bt-2}w_{k}^2+o(\ak\ek)\\
&\geq&
2^{-\frac{2}{N}}
S^{\frac{2-N}{2}}\left[B(q,N)^\bt\ak\ek+
\gamma_{2}
{\txt
\frac{1}{|\Om|^{1-\bt}}q^\bt\bt^\bt
}
\ak\int U_{k}^{q\bt-2}w_{k}^2\right]\\
&&+\ o(\ak\ek),
\eas
for any fixed number $\gamma_{2}<1$. 
This is our lower bound for $\ak\beta(u_{k})\delta(u_{k})$.

Combining the lower bounds for $\beta(u_{k})$ and for 
$\ak\beta(u_{k})\delta(u_{k})$,
\bas
    \Psi_{\ak}(\uk)&\geq&
    \frac{\ngukq}{\nuksq}+2^{-\frac{2}{N}}
    S^{\frac{2-N}{2}}B(q,N)^\bt\ak\ek\\
    &&+\ 
    2^{\frac{N-2}N}S^{\frac{2-N}{2}}\left[\gamma_{2}
    \pnhwkq+\gamma_{2}
    {\txt
    \frac{1}{|\Om|^{1-\bt}}\frac{q^\bt\bt^\bt}{2}
    }
    \ak\int U_{k}^{q\bt-2}w_{k}^2
    \right.\\
    &&\left.\qquad -\ (\tsu)\iusd\right]+o(\ak\ek).
\eas
From Lemma~\ref{dif}, the term inside the square parenthesis is greater 
than
$$
    \left[\left(
    \gamma_{2}-\frac{(\tsu)}{(\tsu)+\gamma_{1}}\right)\left(\pnhwkq
    +
    {\txt
    \frac{1}{|\Om|^{1-\bt}}\frac{q^\bt\bt^\bt}{2}
    }
    \ak\int U_{k}^{q\bt-2}w_{k}^2\right)+o(\ek)
    \right].
$$
We choose $\gamma_{2}\geq\frac{(\tsu)}{(\tsu)+\gamma_{1}}$. 
As a consequence, this term is greater than 
$o(\ek)$.
Hence,
$$
\Psi_{\ak}(\uk)\geq\frac{\ngukq}{\nuksq}+2^{-\frac{2}N}S^\frac{2-N}{2}
B(q,N)^\bt\ak\ek+o(\ak\ek).
$$
Recall, from Adimurthi and 
Mancini \cite{AM}, that for $N\geq 5$ and $y\in\partial\Om$,
\be\label{ei}
\frac{|\nabla U_{\eps,y}|_{2}^2}{|U_{\eps,y}|_{\ts}^2}=
\frac{S}{2^\frac 
2N}-2^{\frac{N-2}N}SA(N)H(y)
\eps+O(\eps^2),
\ee
with
$$
A(N)=
{\textstyle
\frac{N-1}{N}}\frac{1}{\sqrt{\pi}}\,
\frac{\Gamma\left(\frac{N-3}{2}\right)}{\Gamma\left(\frac{N-2}{2}\right)}
$$
and $H(y)$ the mean curvature
of $\partial\Om$  at $y$ 
with respect to the unit outward normal.
Therefore,
\bas
\Psi_{\ak}(\uk)&
\geq&
\frac{S}{2^\frac 2N}\\ &&
+\ 2^{-\frac{2}N}S^\frac{2-N}{2}B(q,N)^\bt\ak\ek\left[1-2
S^{\frac N2}\frac{A(N)}{B(q,N)^\bt}H(y_{k})\frac 1\ak+o(1)
\right]\\
&>&\frac{S}{2^\frac 2N},
\eas
for large $k$.
\begin{rem}
    If in the argument above, instead of using the inequality
    $$\txt(x+y)^\bt\geq\frac{1}{2^{1-\bt}}x^\bt+
    \frac{1}{2^{1-\bt}}y^\bt,$$
    we use
    $$\txt(x+y)^\bt\geq(1-\varsigma)^{1-\bt}x^\bt+
    \varsigma^{1-\bt}y^\bt,$$ for $x$, $y>0$ and
    $\varsigma$ such that $0<\varsigma<1$,
    then we obtain the following lower bound for 
$\Psi_{\ak}(\uk)$:
$$
\frac{S}{2^\frac 2N}
+{\txt\frac{2^{\frac{N-2}{N}}}{S^\frac{N-2}{2}}}
{\txt\frac{(1-\varsigma)^{1-\bt}B(q,N)^\bt}{2^\bt}}\ak\ek\left[1-
{\txt\frac{2^\bt}{(1-\varsigma)^{1-\bt}}}S^{\frac N2}
{\txt\frac{A(N)}{B(q,N)^\bt}}
{\txt\frac{H(y_{k})}{\ak}}+o(1)
\right].$$
\end{rem}
{\bf Proof of Theorem~\ref{theorem}.}
So assume $\alpha_{0}$, in (\ref{min}),  is $+\infty$.  Choose a
    sequence $\alpha_{k}\to+\infty$ as $k\to+\infty$ and denote by $\uk$ 
    a positive minimizer for $\Psi_{\alpha_{k}}$ satisfying \rfk.  From 
    Lemmas~\ref{infinito} and \ref{instantao}, the conditions 
    (\ref{sndd}), (\ref{m}), 
    (\ref{ck}), (\ref{akek}) and (\ref{zero}) hold.  Hence 
    $S_{\ak}=\Psi_{\ak}(\uk)>\sddn$ for large $k$, which is impossible.
    Therefore $\alpha_{0}$ is finite.  
    Remarks~\ref{pscondition} and \ref{coro} imply
    Theorem~\ref{theorem}.
\hfill$\Box$
        
We give one more lower bound for $\alpha_{0}$,
in addition to one in Remark~\ref{lbum}.
\begin{lem}\label{lbdois}
   The value $\alpha_{0}$ has the lower bound
    $$\alpha_{0}\geq 2^\bt S^\frac{N}{2}\frac{A(N)}{B(q,N)^\bt}
    \max_{\partial\Om}H.$$
\end{lem}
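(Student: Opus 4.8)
The plan is to produce, for each $\alpha$ strictly below the asserted quantity, a function $u\in H^1(\Om)\setminus\{0\}$ with $\Psi_{\alpha}(u)<\sddn$. Since $S_{\alpha}=\inf_{H^1(\Om)\setminus\{0\}}\Psi_{\alpha}$, this gives $S_{\alpha}<\sddn$, hence $\alpha\le\alpha_{0}$ by the definition (\ref{min}) of $\alpha_{0}$; letting $\alpha$ increase to the bound then yields the lemma. If $\max_{\partial\Om}H\le 0$ there is nothing to prove, because $\alpha_{0}>0$ (by (\ref{szero}) and the continuity of $\alpha\mapsto S_{\alpha}$); so assume $\max_{\partial\Om}H>0$, fix $y_{0}\in\partial\Om$ with $H(y_{0})=\max_{\partial\Om}H$, and fix $\alpha$ with $0\le\alpha<2^\bt S^{\frac N2}\frac{A(N)}{B(q,N)^\bt}H(y_{0})$. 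The test function I would use is the boundary--concentrating rescaled instanton $U_{\eps,y_{0}}$, and the claim is that $\Psi_{\alpha}(U_{\eps,y_{0}})<\sddn$ for all sufficiently small $\eps>0$.

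To prove the claim I would expand $\beta(U_{\eps,y_{0}})$ and $\delta(U_{\eps,y_{0}})$ as $\eps\to 0$, using only the elementary asymptotics of the rescaled instanton near a boundary point --- the same ones underlying (\ref{ei}) and the estimate (\ref{F}). With $\Om_{\eps}:=(\Om-y_{0})/\eps$, the substitution $x=y_{0}+\eps z$ gives $\int_{\Om}U_{\eps,y_{0}}^\ts=\int_{\Om_{\eps}}U^\ts$, $a|U_{\eps,y_{0}}|_{2}^{2}=a\eps^{2}\int_{\Om_{\eps}}U^{2}$ and $|U_{\eps,y_{0}}|_{q}^{q\bt}=\eps\lb\int_{\Om_{\eps}}U^{q}\rb^\bt$, the last because the $L^{q}$ scaling exponent equals $N-\frac{N-2}{2}q=\frac1\bt$. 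Since $N\ge 5$ we have $U\in L^{2}(\Rb^{N})$, and since $q\ge\tf>\frac N{N-2}$ also $U^{q}\in L^{1}(\Rb^{N})$; as $\Om_{\eps}\to\ttt$, dominated convergence gives $\int_{\Om_{\eps}}U^{p}\to\frac12\int_{\Rb^{N}}U^{p}$ for $p\in\{2,q,\ts\}$. Hence $|U_{\eps,y_{0}}|_\ts^\ts=\sndd+o(1)$, $||U_{\eps,y_{0}}||^{2}=\sndd+o(1)$ (the $L^{2}$ term being $O(\eps^{2})$), and $|U_{\eps,y_{0}}|_{q}^{q\bt}=\lb\frac{B(q,N)}{2}\rb^{\bt}\eps+o(\eps)$, with $B(q,N)=\int_{\Rb^{N}}U^{q}$ the constant of (\ref{F}). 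Because the exponents in the definition (\ref{psiu}) of $\delta$ satisfy $\frac{2-\as}{2}+\frac{\as}{2}=1$, the denominator $||U_{\eps,y_{0}}||^{2-\as}|U_{\eps,y_{0}}|_\ts^{\ts\as/2}$ has leading term $\sndd$, so
$$
\delta(U_{\eps,y_{0}})=\frac{\lb B(q,N)/2\rb^{\bt}}{\sndd}\,\eps+o(\eps)=\frac{2^{1-\bt}B(q,N)^\bt}{S^{\frac N2}}\,\eps+o(\eps).
$$
Inserting this and (\ref{ei}) into $\Psi_{\alpha}=\beta(1+\alpha\delta)$, and absorbing the $L^{2}$ contribution to $\beta$ into the $O(\eps^{2})$ of (\ref{ei}), gives
$$
\Psi_{\alpha}(U_{\eps,y_{0}})=\sddn+\left[\alpha\,\frac{2^{1-\bt}B(q,N)^\bt}{2^{\frac2N}S^{\frac N2-1}}-2^{\frac{N-2}{N}}SA(N)H(y_{0})\right]\eps+o(\eps).
$$

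Finally I would read off the sign of the bracket: since $2^{\frac{N-2}{N}}\cdot 2^{\frac2N}=2$, it is negative exactly when $\alpha<2^\bt S^{\frac N2}\frac{A(N)}{B(q,N)^\bt}H(y_{0})$, which is precisely our hypothesis. Thus $\Psi_{\alpha}(U_{\eps,y_{0}})<\sddn$ for all small $\eps>0$, so $S_{\alpha}<\sddn$ and $\alpha\le\alpha_{0}$; taking the supremum over admissible $\alpha$ gives $\alpha_{0}\ge 2^\bt S^{\frac N2}\frac{A(N)}{B(q,N)^\bt}\max_{\partial\Om}H$. The one genuinely delicate point is the bookkeeping of the factor $\frac12$ created by the passage to the half space $\ttt$: it cancels completely between $||U_{\eps,y_{0}}||^{2-\as}$ and $|U_{\eps,y_{0}}|_\ts^{\ts\as/2}$ in the denominator of $\delta$, so a single $\frac12$ survives inside $\lb B(q,N)/2\rb^{\bt}$ in the numerator, and this surviving $2^{-\bt}$ is exactly what yields the factor $2^\bt$ in the bound (one would otherwise obtain only the weaker $S^{\frac N2}\frac{A(N)}{B(q,N)^\bt}\max_{\partial\Om}H$, e.g.\ by testing with a non--concentrating function). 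Everything else is routine and parallels the expansions already used for (\ref{ei}) and (\ref{F}).
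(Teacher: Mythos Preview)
Your argument is correct and is essentially the paper's own proof: both fix a boundary point $P$ maximizing $H$, test $\Psi_{\alpha}$ with the concentrating instanton $U_{\eps,P}$, expand $\Psi_{\alpha}(U_{\eps,P})=\beta(1+\alpha\delta)$ to first order in $\eps$ using (\ref{ei}) for $\beta$ and the asymptotic $|U_{\eps,P}|_{q}^{q\bt}=(B(q,N)/2)^{\bt}\eps+o(\eps)$ for the numerator of $\delta$, and read off that the coefficient of $\eps$ is negative precisely when $\alpha<2^{\bt}S^{N/2}\frac{A(N)}{B(q,N)^{\bt}}H(P)$. The only cosmetic differences are that the paper quotes the sharper expansion (\ref{iukz}) from the Appendix (giving $O(\eps^{2})$ remainder) where you invoke dominated convergence for the $o(\eps)$ remainder, and the paper bounds $\beta\delta$ directly rather than $\delta$ separately; neither affects the argument.
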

\begin{proof}
    Suppose $\alpha<2^\bt S^\frac{N}{2}\frac{A(N)}{B(q,N)^\bt}
    \max_{\partial\Om}H$.
    Choose $P\in\partial\Om$ such that $H(P)=\max_{\partial\Om}H$. 
    From 
    (\ref{ei}),
    $$\beta(U_{\eps,P})=\frac{S}{2^\frac 
2N}-2^{\frac{N-2}N}S
A(N)H(P)\eps+o(\eps).$$
From 
(2.17) and (2.38) in Adimurthi and 
Mancini \cite{AM}, 
$$
||U_{\eps,P}||^\as\leq\lb\sndd\rb^\frac{\as}{2}+O(\eps)
$$
and, from (2.18) in Adimurthi and Mancini \cite{AM},
$$
|U_{\eps,P}|_{\ts}^{-(2+\ts\!\as/2)}\leq
\lb\sndd\rb^{-\lb\frac{\as}{2}+\frac{2}{\ts}\rb}+O(\eps).
$$
Together,
$$
\frac{||U_{\eps,P}||^\as}{|U_{\eps,P}|_{\ts}^{2+\ts\!\as/2}}
\leq 2^\frac{N-2}{N}S^\frac{2-N}{2}+O(\eps).
$$
Moreover, from (\ref{iukz}),
$$
|U_{\eps,P}|_{q}^{q\bt}\leq{\txt\frac{B(q,N)^\bt}{2^\bt}}\eps+
O(\eps^{2})
$$
Combining the last two estimates,
$$
(\beta\delta)(U_{\eps,P})\leq
2^\frac{N-2}{N}S^\frac{2-N}{2}
{\txt\frac{B(q,N)^\bt}{2^\bt}}\eps+
o(\eps).
$$
We can estimate $S_{\alpha}$ from above by
\bas
S_{\alpha}&\leq&\Psi_{\alpha}(U_{\eps,P})\\ &\leq&
\frac{S}{2^\frac 2N}
-2^{\frac{N-2}N}S^\frac{2-N}{2}
{\txt\frac{B(q,N)^\bt}{2^\bt}}\alpha\eps\left[2^\bt
S^{\frac N2}\frac{A(N)}{B(q,N)^\bt}H(P)\frac 1\alpha-1+o(1)
\right]
\eas
as $\eps\to 0$.  Since we are supposing 
$\alpha<2^\bt S^\frac{N}{2}\frac{A(N)}{B(q,N)^\bt}
    H(P)$, the value of $S_{\alpha}$ satisfies $S_{\alpha}<\sddn$.
    This proves the lemma.
\end{proof}

\section*{Appendix: The estimate for $|U_{\eps,y}|_{q}^q$ for
$y\in\partial\Om$}

In this Appendix we 
prove that if $y\in\partial\Omega$, then
\ba|U_{\eps,y}|_{q}^q
&=&\frac{B(q,N)}{2}\eps^{(\ts-q)\frac{N-2}{2}}+
O(\eps^{1+1/{\bt}}),\nonumber\\
&=&\frac{B(q,N)}{2}\eps^{1/{\bt}}+O(\eps^{1+{1}/{\bt}}),\label{iukz}
\ea
with
$$
B(q,N)=
\int_{\Rb^{N}}U^q=\pi^\frac{N}{2}[N(N-2)]^\frac{N}{2}
\frac{\Gamma\left(\frac{N-2}{2}q-\frac{N}{2}\right)
}{\Gamma\left(\frac{N-2}{2}q\right)},
$$
by adapting an estimate due to Adimurthi 
and Mancini~\cite{AM} ($U_{\eps,y}$ is defined in (\ref{resins})).
By a change of coordinates we can assume that $y=0$,
$$
B_{R}(0)\cap\Om=\{(x',x_{N})\in B_{R}(0)|x_{N}>\rho(x')\}
$$
and
$$
B_{R}(0)\cap\partial\Om=\{(x',x_{N})\in B_{R}(0)|x_{N}=\rho(x')\},
$$
for some $R>0$, where $x'=(x_{1},\ldots,x_{N-1})$,
$$\rho(x')=\sum_{i=1}^{N-1}\lambda_{i}x_{i}^2+O(|x'|^3),$$
$\lambda_{i}\in\Rb$, $1\leq i\leq N-1$.  

Let 
$U_{\eps}:=U_{\eps,0}$ and
$\Sigma:=\{(x',x_{N})\in B_{R}(0)|0<x_{N}<\rho(x')\}$.
Then
\be\label{sumudt}
|U_{\eps}|_{q}^q=
\frac 12\int_{B_{R}(0)}U_{\eps}^q-
\int_{\Sigma}U_{\eps}^q+\int_{B_{R}^{C}(0)\cap\Om}U_{\eps}^q
\ee
if all the $\lambda_{i}$'s are positive.
If all the $\lambda_{i}$'s are 
negative, then the minus sign on the right hand side 
turns into a plus sign.  
Henceforth we will assume all the $\lambda_{i}$'s are positive.
The final estimate for $|U_{\eps}|_{q}^q$ will hold no matter what the 
sign of the $\lambda_{i}$'s, for it holds when 
the $\lambda_{i}$'s all have the same sign.

We will estimate each of the three terms on the right hand side of
(\ref{sumudt}).  For the third term we have
\bas
\int_{B_{R}^{C}(0)\cap\Om}U_{\eps}^q&\leq&
\int_{B_{R}^{C}(0)}U_{\eps}^q\\
&=&O\left(\eps^\frac{1}{\bt}\int_{R/\eps}^{+\infty}
\frac{r^{N-1}}{(1+r^2)^{\frac{N-2}{2}q}}dr\right)\\
&=&O\lb\eps^\frac{1}{\bt}\times\eps^{N-\frac{2}{\bt}}\rb\\
&=&O\lb\eps^{N-\frac{1}{\bt}}\rb\\
&=&O\lb\eps^{\frac{N-2}{2}q}\rb.
\eas

Using this estimate, for the first term on the 
right hand side of (\ref{sumudt}) 
we have
\bas
\frac 12\int_{B_{R}(0)}U_{\eps}^q&=&\frac 
12\int_{\Rb^{N}}U_{\eps}^q+O\lb\eps^{N-\frac{1}{\bt}}\rb\\
&=&\frac{1}{2}\eps^\frac{1}{\bt}\int_{\Rb^{N}}
U^q+O\lb\eps^{N-\frac{1}{\bt}}\rb\\
&=&\frac{1}{2}B(q,N)\eps^\frac{1}{\bt} +O\lb\eps^{N-\frac{1}{\bt}}\rb,
\eas
with
\bas
B(q,N)&:=&\int_{\Rb^{N}}U^q\\
&=&[N(N-2)]^\frac{N}{2}\omega_{N}\int_{0}^{+\infty}
\frac{r^{N-1}}{(1+r^2)^{\frac{N-2}{2}q}}\,dr\\
&=&[N(N-2)]^\frac{N}{2}\omega_{N}
\frac{\Gamma\left(\frac{N}{2}\right)
\Gamma\left(\frac{N-2}{2}q-\frac{N}{2}\right)
}{2\Gamma\left(\frac{N-2}{2}q\right)}\\
&=&\pi^\frac{N}{2}[N(N-2)]^\frac{N}{2}
\frac{\Gamma\left(\frac{N-2}{2}q-\frac{N}{2}\right)
}{\Gamma\left(\frac{N-2}{2}q\right)};\\
\noalign{\noindent\mbox{in particular,}}\\
B(\tf,N)&=&\pi^\frac{N}{2}[N(N-2)]^\frac{N}{2}
\frac{\Gamma\left(\frac{N(N-3)}{2(N-1)}\right)
}{\Gamma\left(\frac{N(N-2)}{N-1}\right)}\\
\noalign{\noindent\mbox{and}}\\
B(\tz,N)&=&\pi^\frac{N}{2}[N(N-2)]^\frac{N}{2}
\frac{\Gamma\left(\frac{N-2}{2}\right)
}{\Gamma\left(N-1\right)}.
\eas

So we are left with the estimate of the second term on the right hand 
side of (\ref{sumudt}).  Let $\sigma>0$ be such that
$$
L_{\sigma}:=\{x\in\Rb^{N}|\,|x_{i}|<\sigma, 1\leq i\leq N\}\subset 
B_{\frac{R}{4}}(0)
$$
and define
$$
\Delta_{\sigma}:=\{x'|\,|x_{i}|<\sigma,1\leq i\leq N-1\}.
$$
For the second term on the right hand side of (\ref{sumudt}),
\bas
   \int_{\Sigma}U_{\eps}^q&=&\int_{\Sigma\cap 
   L_{\sigma}}U_{\eps}^q+O\lb\eps^{N-\frac{1}{\bt}}\rb\\
   &=&\int_{\Delta_{\sigma}}\int_{0}^{\rho(x')}U_{\eps}^q
   dx_{N}\,dx'+O\lb\eps^{N-\frac{1}{\bt}}\rb\\
   &=&O\left(\int_{\Delta_{\sigma}}\int_{0}^{\rho(x')}
   \frac{\eps^{\frac{N-2}{2}q}}{(\eps^2+|x|^2)^{\frac{N-2}{2}q}}\,
   dx_{N}\,dx'\right)+O\lb\eps^{N-\frac{1}{\bt}}\rb;\\
\noalign{\noindent using the change of variables 
$\sqrt{\eps^2+|x'|^2}\,y_{N}=x_{N}$,}
   &=&O\left(\int_{\Delta_{\sigma}}
   \frac{\eps^{\frac{N-2}{2}q}}{(\eps^2+|x'|^2)^{
   \frac{N-2}{2}q-{\frac{1}{2}}}}   
   \int_{0}^{\frac{\rho(x')}{\sqrt{\eps^2+|x'|^2}}}
   \frac{1}{(1+y_{N}^2)^{\frac{N-2}{2}q}}\,
   dy_{N}\,dx'\right)\\
   &&+\ O\lb\eps^{N-\frac{1}{\bt}}\rb;\\
\noalign{\noindent 
let $\kappa\geq 0$;
$\int_{0}^s\frac{1}{(1+t^2)^{\kappa}}\,dt\leq s$ for
$s>0$ and
$\int_{0}^s\frac{1}{(1+t^2)^{\kappa}}\,dt
=s-\frac{\kappa}{3}s^3+\frac{\kappa(\kappa+1)}{10}s^5-O(s^7)$
for small $s$; thus $\int_{0}^s\frac{1}{(1+t^2)^{\kappa}}\,dt
=s+O(s^3)$ for all $s$ and we can continue}
    &=&O\left(\eps^{\frac{N-2}{2}q}\int_{\Delta_{\sigma}}
   \frac{{\textstyle\sum\lambda_{i}x_{i}^2}}{(\eps^2+
   |x'|^2)^{\frac{N-2}{2}q}}  
   \,dx'\right)\\
   &&+\ O\left(\eps^{\frac{N-2}{2}q}\int_{\Delta_{\sigma}}
   \frac{|x'|^3}{(\eps^2+|x'|^2)^{\frac{N-2}{2}q}}  
   \,dx'\right)\\
   &&+\ O\lb\eps^{N-\frac{1}{\bt}}\rb\\
   &=&O\left(\eps^{\frac{1}{\bt}+1}\int_{\frac{\Delta_{\sigma}}{\eps}}
   \frac{|y'|^2}{(1+|y'|^2)^{\frac{N-2}{2}q}}  
   \,dy'\right)\\
   &&+\ O\left(\eps^{\frac{1}{\bt}+2}\int_{\frac{\Delta_{\sigma}}{\eps}}
   \frac{|y'|^3}{(1+|y'|^2)^{\frac{N-2}{2}q}}  
   \,dy'\right)\\
   &&+\ O\lb\eps^{N-\frac{1}{\bt}}\rb\\
   &=&O(\eps^{\frac{1}{\bt}+1}).
\eas

Combining the estimates for the three terms on the right hand side 
of (\ref{sumudt}),
$$
|U_{\eps}|_{q}^q=\frac{1}{2}B(q,N)\eps^\frac{1}{\bt}+
O\lb\eps^{\frac{1}{\bt}+1}\rb.
$$


\begin{thebibliography}{99}
\bibitem{AM} Adimurthi and G.\ Mancini, {\it The Neumann problem
for elliptic equations with critical nonlinearity,} 
Nonlinear analysis,  Quaderni,
Scuola Norm.\ Sup.\ Pisa, (1991), 9-25.
\bibitem{APY} Adimurthi, F.\ Pacella and S.L.\ Yadava, {\it Interaction
between the geometry of the boundary and positive solutions of a
semilinear Neumann problem with critical nonlinearity,} J.\ Funct.\
Anal., 113 (1993), 318-350.
\bibitem{AY1} Adimurthi and S.L. Yadava, {\it Some remarks on 
Sobolev
type inequalities,} Calc.\ Var., 2 (1994), 427-442.
\bibitem{ADN} S.\ Agmon, A.\ Douglis and L.\ Nirenberg, {\it Estimates
Near the Boundary for Solutions of Elliptic Partial Differential 
Equations Satisfying General Boundary Conditions. I,}
Comm.\ Pure Appl.\ Math., 12 (1959), 623-727.
\bibitem{BE} G.\ Bianchi and H.\ Egnell, {\it A note on the Sobolev
inequality,} J.\ Funct.\ Anal., 100 (1991). 18-24.
\bibitem{BL} H.\ Brezis and E.\ Lieb, {\it Sobolev inequalities 
with remainder terms,} J.\ Funct.\
Anal., 62 (1985), 73-86.
\bibitem{BN} H.\ Brezis and L.\ Nirenberg, {\it Positive 
Solutions of Nonlinear Elliptic Equations Involving Critical Sobolev 
Exponents,} Comm.\ on Pure and Appl.\ Math., 36 (1983), 437-477.
\bibitem{CW} J.\ Chabrowski and M.\ Willem, {\it Concentration phenomena
for the Neumann problem with critical nonlinearity,} to appear in
Calc.\ Var.
\bibitem{Ch} P.\ Cherrier, {\it Probl\'{e}mes de Neumann nonlin\'eaires
sur des vari\'{e}t\'{e}s Riemannienes,} J.\ Funct.\ Anal., 57
(1984), 154-207.
\bibitem{G} D.G.\ Costa and P.M.\ Gir\~{a}o, {\it
Existence and nonexistence of
    least energy solutions of the Neumann problem for a 
    semilinear elliptic equation with critical Sobolev exponent
    and a critical lower-order perturbation,} to appear.
\bibitem{GS} B.\ Gidas and J.\ Spruck, {\it A Priori bounds for 
positive solutions of nonlinear elliptic equations,} Comm.\ in
PDE's, 6 No.\ 8 (1981), 883-901.
\bibitem{Girao} P.M.\ Gir\~{a}o, {\it A sharp inequality for Sobolev 
functions,} to appear.
\bibitem{HV}
E.\ Hebey and M.\ Vaugon,
{\it The best constant problem in the Sobolev embedding theorem for complete Riemannian manifolds,}
Duke Math.\ J., 79 No.\ 1 (1995), 235-279. 
\bibitem{LZ} Y.Y.\ Li and M.\ Zhu, {\it Sharp Sobolev Inequalities
Involving Boundary Terms,} Geom.\ Funct.\ Anal.,  8 No.\ 1 (1998), 
59-87.
\bibitem{L1} P.L.\ Lions, {\it The concentration-compactness principle in
the calculus of variations, The limit case,} Revista Math.\
Iberoamericana, 1 No.\ 1 and No.\ 2 (1985), 145-201 and 45-120.
\bibitem{LPT} P.L.\ Lions, F.\ Pacella and M.\ Tricarico, {\it Best 
constants
in Sobolev inequalities for functions vanishing on some part of the
boundary and related questions,} Indiana Univ.\ Math.\ J., 37 No.\
2 (1988), 301-324.
\bibitem{R} O.\ Rey, {\it 
The role of the Green's function in a nonlinear elliptic equation 
involving the critical Sobolev exponent,} J.\ Funct.\
Anal., 89 (1990), 1-52. 
\bibitem{T} G.\ Talenti, {\it Best constant in Sobolev Inequality,}
Ann.\ Mat.\ Pura Appl., 110 No.\ 4 (1976), 353-372.
\bibitem{WX} X.J.\ Wang, {\it Neumann problems of semilinear elliptic
equations involving critical Sobolev exponents,} J.\ Diff.\ Eq., 93
(1991), 283-310.
\bibitem{W2} Z.Q.\ Wang, {\it Existence and nonexistence of G-least
energy solutions for a nonlinear Neumann problem with critical
exponent in symmetric domains,} Calc.\ Var., 8 (1999), 109-122.
\bibitem{W1} Z.Q.\ Wang, {\it High-energy and multi-peaked solutions
for a nonlinear Neumann problem with critical exponents,} Proc.\
Roy.\ Soc.\ of Edinburgh, 125A (1995), 1003-1029.
\bibitem{zhu} M.\ Zhu. {\it Some general forms of sharp Sobolev 
inequalities,} J.\ Funct.\ Anal., 156 No.\ 1 (1998), 75-120.
\bibitem{Zh} M.\ Zhu, {\it Sharp Sobolev inequalities with interior
norms,} Calc.\ Var., 8 (1999), 27-43.
\end{thebibliography}
\end{document}